\newtheorem{theorem}{Theorem}[section]
\newtheorem{lemma}[theorem]{Lemma}
\newtheorem{proposition}[theorem]{Proposition}
\newtheorem{corollary}[theorem]{Corollary}
\newtheorem{Conjecture}[theorem]{Conjecture}
\theoremstyle{definition}
\newtheorem{definition}[theorem]{Definition}
\newtheorem{example}[theorem]{Example}
\newtheorem{inductive}[theorem]{Inductive Step}
\theoremstyle{remark}
\newtheorem{remark}[theorem]{Remark}
\newtheorem{notation}[theorem]{Notation}
\numberwithin{equation}{section}
\newcommand{\rar}{\rightarrow}
\newcommand{\lar}{\longrightarrow}
\def\phi{\varphi}
\def\PP{\mathbb P}
\def\A{\mathbb A}
\def\N{\mathbb N}
\def\Z{\mathbb Z}
\def\kx{k[X_0,\ldots,X_n]}
\def\fp{{\mathcal P}}
\def\cc{{\bf C}}
\def\mm{{\bf m}}
\def\gothm{{\bf m}}
\def\cone#1{{\rm cone}\,(#1)}
\def\mincone#1{{\rm Mincone}\,(#1)}
\begin{document}

\title[Resolutions for monomial curves defined by  Arithmetic Sequences]
{Minimal graded Free Resolutions for Monomial Curves defined by Arithmetic Sequences}

 \author{Philippe Gimenez}
 \address{Department of Algebra, Geometry and
Topology, Faculty of Sciences, University of Valladolid, 47005
Valladolid, Spain.}
 \email{pgimenez@agt.uva.es}
 \thanks{The first author is partially supported by MTM2010-20279-C02-02,
{\it Ministerio de Educaci\'on y Ciencia~-~Espa\~na}.}

\author{Indranath Sengupta}
\address{Department of Mathematics, Jadavpur University, Kolkata,
WB 700 032, India.} \email{sengupta.indranath@gmail.com}
\thanks{The second author thanks DST, Government of India for financial support for the
project ``Computational Commutative Algebra", reference no.
SR/S4/MS: 614/09.
}

 \author{Hema Srinivasan}
 \address{Mathematics Department, University of
Missouri, Columbia, MO 65211, USA.}
 \email{SrinivasanH@math.missouri.edu}

\subjclass[2000]{Primary 13D02; Secondary 13A02, 13C40.}

\date{}


\begin{abstract}
Let $\mm=(m_0,\ldots,m_n)$ be an arithmetic sequence, i.e., a
sequence of integers $m_0<\cdots<m_n$ with no common factor that
minimally generate the numerical semigroup $\sum_{i=0}^{n}m_i\N$ and
such that $m_i-m_{i-1}=m_{i+1}-m_i$ for all $i\in\{1,\ldots,n-1\}$.
The homogeneous coordinate ring $\Gamma_\mm$ of the affine monomial
curve parametrically defined by $X_0=t^{m_0},\ldots,X_n=t^{m_n}$ is
a graded $R$-module where $R$ is the polynomial ring
$k[X_0,\ldots,X_n]$ with the grading obtained by setting
$\deg{X_i}:=m_i$. In this paper, we construct an explicit minimal
graded free resolution for $\Gamma_\mm$ and show that its Betti
numbers depend only on the value of $m_0$ modulo $n$.  As a consequence, we prove a conjecture of
Herzog and Srinivasan on the eventual periodicity of the Betti numbers of semigroup rings under 
translation for the monomial curves defined by an arithmetic sequence.
\end{abstract}

\maketitle

\section*{Introduction}

The study of affine and projective monomial curves has a long
history beginning with the classification of space monomial curves in \cite{herzog}.
One of the most dramatic results in the subject is the
fact that the number of generators for the defining ideal of these
curves in the affine space $\A^{n+1}$ is unbounded, \cite{bresinski}.  Much of the
study  to date has focussed on determining the generators and the
first Betti number  of the defining ideal  for many different
classes of monomial curves. In this paper, we study the later Betti numbers as
well as the structure of the resolution.   Exact generators in the
case of curves defined by an arithmetic sequence or an almost
arithmetic sequences are known; see \cite {patil}, \cite{malooseng}, 
\cite{lipatroberts}.   In the case of arithmetic sequences, these ideals
have another interesting structure as sum of two determinantal
ideals; see \cite {hip}.  This provides the main impetus for
understanding the resolution of these ideals.  In this article, we
construct the minimal resolution explicitly for these ideals and
compute all the Betti numbers.

\medskip

The main goal  of this article is to prove the following conjecture
that states that in codimension $n$, there are exactly $n$ distinct
paterns for the minimal graded free resolution of a monomial curve
defined by an arithmetic sequence:

  \medskip

\begin{center}
\begin{tabular}{p{11cm}}\it
Curves in affine $(n+1)$-space defined by a monomial parametrization $X_0=t^{m_0}$,
\ldots, $X_n=t^{m_n}$ where $m_0 < \ldots < m_n$ are positive integers in arithmetic
progression have the property that their Betti numbers are determined solely by $m_0$ modulo $n$.
\end{tabular}
\end{center}

\medskip

Basis for this conjecture came from the observations in \cite
{seng}, where an explicit minimal free resolution was constructed
for $n=3$, using Gr\"{o}bner basis  techniques.  Subsequently, this
property was verified to hold for several examples in \cite{eaca},
and it was proved for certain special cases in \cite{hip}, leading
us to frame it as a conjecture. We give a complete proof of the
conjecture in this article.

\medskip

Let $k$ denote an arbitrary field and $R$ be the polynomial ring
$\kx$. Associated to a sequence of positive integers $\mm = (m_0, \ldots, m_n)$,
we have  the $k$-algebra homomorphism $\phi:R\rar k[t]$ given
by $\phi(X_i)=t^{m_i}$ for all $i=0,\ldots,n$.  The ideal
$\fp:=\ker{\phi}\subset R$ is the defining ideal of the monomial
curve in $\A_k^{n+1}$ given by the parametrization $X_0=t^{m_0}$,
\ldots, $X_n=t^{m_n}$ which we denote by ${C}_{\mm}$ . The $k$-algebra of the numerical semigroup
$\Gamma (\mm)$ generated by $\mm = (m_0,\ldots,m_n)$ is the semigroup ring
$k[\Gamma(\mm)]:=k[t^{m_0},\ldots,t^{m_n}]\simeq R/\fp$ which is
one-dimensional.  Moreover, $\fp$ is a perfect ideal of codimension $n$ and it
is well known that it is minimally generated by binomials.

\medskip

Since for any positive integer $t$ the curve ${C}_{t\mm}$  is
isomorphic to the curve $ C_{\mm}$ for they have the same defining
ideal, we may as well assume without loss of generality that the
integers $ m_0, m_1, \ldots, m_n $ have {\it no common factor}.
Further, if the semigroup generated by a proper subset of $\mm$
equals the semigroup $\Gamma (\mm)$, then the curve ${C}_{\mm}$ is
degenerate and
the resolution of its coordinate ring can be studied in a polynomial
ring with less variables. Hence we can reduce to the case where the
integers in $\mm$ {\it minimally generate} the numerical semigroup
$\Gamma (\mm)$.
If, in addition, the integers $m_i$ are in {\it arithmetic
progression}, i.e., $m_i-m_{i-1}=m_{i+1}-m_i$ for all
$i\in\{1,\ldots,n-1\}$, $\mm$ is said to be an {\bf arithmetic
sequence}.

\medskip

An interesting feature that was revealed in \cite{hip} is that when
$\mm = (m_0,\ldots,m_n)$ is  an arithmetic sequence, the ideal $\fp$
can be written as a sum of two determinantal ideals, $\fp=I_2(A) +
I_2(B)$, as we shall recall in Section~\ref{defidealsection}. Here,
$I_2(A)$ is in fact the defining ideal of the rational normal curve
in $\PP^n$.  Let us write $m_0$ as $m_0=an+b$ where $a,b$ are
positive integers and $b\in[1,n]$.  When $b=1$, the sum $I_2(A) +
I_2(B)$ is again a determinantal ideal and its resolution is
described in \cite[Theorem~2.1 \& Corollary~2.3]{hip}.  When $b=n$,
it is just one generator away from a determinantal ideal which is
again simple; see \cite[Theorem~2.4 \& Corollary~2.5]{hip}. The case
$b=2$ corresponds to the case where $k[\Gamma(\mm)]$ is Gorenstein.
The resolution was described in this case when $n=4$ in
\cite[Theorem~2.6]{hip}. This result will be completed in
Section~\ref{secgorenstein} where the resolution will be given for
an arbitrary $n$.

\medskip

The observation that the ideals $I_2(A)$ and $ I_2(B)$ are related
by linear quotients (Lemma~\ref{colon}) holds the key for the
construction of the resolution of $\fp$ in general.  We construct a
tower of mapping cones each of which is a cone over  an inclusion of
a shifted graded Koszul complex into a  graded Eagon-Northcott
complex. Unfortunately,  the above
construction of iterated mapping cone does not yield a minimal free
resolution for $\fp$ and therefore we will have to get rid of the redundancy and make
the resolution minimial. The complete graded description of the
resolution is given in the main Theorem \ref {mainThm}.  As a
consequence, we compute the total Betti numbers   $\beta _j$ in
Theorem~\ref{ThmIndraConj} as follows: if $m_0\equiv b\mod n$ with
$b\in [1,n]$, then
$$
\beta_j=j{n\choose j+1}+
 \left\{\begin{array}{ll}
\displaystyle{(n-b+2-j){n\choose j-1}}&\hbox{if}\ 1\leq j\leq n-b+1,\\
\displaystyle{(j-n+b-1){n\choose j}}&\hbox{if}\ n-b+1<j\leq n.
 \end{array}\right.
$$
These numbers clearly depend only on the reminder of $m_0$ modulo
$n$, as conjectured in \cite[Conjecture~1.2]{hip}.

\medskip

As another application of Theorem~\ref{mainThm}, we prove the following conjecture of Herzog and Srinivasan for monomial curves defined by an arithmetic sequence.  The strong form of the conjecture says that if $\mm$ is any increasing sequence of non negative integers and
$\mm +(j)$ denotes the sequence translated by $j$, then the Betti numbers of the semigroup ring
$k[\Gamma(\mm +(j))]$
are eventually periodic in $j$.
We prove in Theorem \ref{ThmPeriodic} that if $\mm$ is an arithnetic sequence, the strong form of the conjecture holds by showing that the betti numbers are periodic with period $m_n- m_0=nd$ where $d$ is the common difference.

\medskip

We will begin with some preliminaries on the defining ideal of monomial curves associated to arithmetic sequences, followed by some facts from mapping cones that we need in the paper. Section~\ref{secgorenstein} is entirely on Gorenstein monomial curves defined by an arithmetic sequence where we construct the minimal graded free resolution of its coordinate ring as a direct sum of a resoltution and its dual.  Section~\ref{secmain} contains the construction of the minimal graded free resolution of the monomial curves defined by an arithmetic sequence in general.  The last section has consequences of the main theorem (Theorem~\ref{mainThm})  for some relation between the regularity of the semigroup ring and the Frobenius number of the semigroup, an independent proof of the characterization of Gorenstein curves defined by an arithmetic sequence as well as the proof of the periodicity conjecture of Herzog and Srinivasan for the semigroup rings defined by an arithmetic sequence.

\section{Preliminaries}\label{secprelim}

Let $(\mm)= (m_0, \ldots, m_n)$ be an {\it arithmetic sequence},
i.e., a sequence of nonnegative integers such that $m_i=m_0+id$ for
some $d\geq 1$ and all $i\in [0,n]$, and such that $m_0, \ldots, m_n$
are relatively prime and minimally generate the numerical semigroup
$\displaystyle{\Gamma(\mm):=\sum_{0\leq i\leq n}m_i\N}$. Note that
one can always write $m_0$ uniquely as $$m_0=an+b$$ with $a,b$ positive integers and $b\in [1,n]$. The 
integer $a$ is non-zero because the sequence $(\mm)= (m_0, \ldots, m_n)$ is minimal.

\subsection{Defining ideal of monomial curves associated to arithmetic sequences}\label{defidealsection}

One knows by \cite[Theorem~2.1]{hip} that the defining ideal $\fp$
of the affine monomial curve $C_\mm \subset \A_k^{n+1}$ is $I_2(A)+I_2(B)$, the sum of two determinantal
ideals of maximal minors with
$$A= {\left(\begin{array}{ccc}
\begin{array}{c}
X_{0}\\[1.5mm]
X_{1}\\[1.5mm]
\end{array} &
\begin{array}{c}
\cdots\\[1.5mm]
\cdots\\[1.5mm]
\end{array} &
\begin{array}{c}
X_{n-1}\\[1.5mm]
X_{n}\\[1.5mm]
\end{array}
\end{array}\right)},
\ B= {\left(\begin{array}{cccc}
\begin{array}{c}
X_{n}^{a}\\[1.5mm]
X_{0}^{a+d}\\[1.5mm]
\end{array} &
\begin{array}{c}
X_{0}\\[1.5mm]
X_{b}\\[1.5mm]
\end{array} &
\begin{array}{c}
\cdots\\[1.5mm]
\cdots\\[1.5mm]
\end{array} &
\begin{array}{c}
X_{n-b}\\[1.5mm]
X_{n}\\[1.5mm]
\end{array}
\end{array}\right)}.$$
\medskip

It is well-known that the ideal $I_2(A)$ is the defining ideal of
the rational normal curve in $\PP_k^n$ of degree $n$; see, e.g.,
\cite[Proposition~6.1]{eis2}. The fact that $I_2(A)$ is contained in
$\fp$ says that the affine monomial curve $C_\mm$ is lying on the
affine cone over the rational normal curve. Indeed, the following
easy lemma states that arithmetic sequences are precisely the ones
whose associated monomial curve lies on this cone. They are also the
only sequences that make the ideal $I_2(A)$ homogeneous with respect
to the gradation obtained by setting $\deg{X_i}=m_i$ for all $i\in
[0,n]$.

\begin{lemma}\label{equivGrad}
Let $\fp\subset k[X_0,\ldots,X_n]$ be the defining ideal of the
non-degenerate monomial curve $C_\mm \subset \A_k^{n+1}$ associated
to a strictly increasing sequence of integers
$\mm=(m_0,\ldots,m_n)$. The following are equivalent:
\begin{enumerate}
\item
$\exists\, d\in\Z,$ such that $ m_i=m_0+id,\ \forall i\in [0,n]$;
\item
$I_2(A)\subset \fp$;
\item
$I_2(A)$ is homogeneous w.r.t. the weighted gradation on $R$ given
by $\deg{X_i}=m_i$ for all $i\in[0,n]$.
\end{enumerate}
\end{lemma}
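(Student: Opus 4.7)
The plan is to reduce the whole statement to an inspection of the generators of $I_2(A)$, namely the binomials $f_{ij} := X_i X_{j+1} - X_{i+1} X_j$ for $0 \leq i < j \leq n-1$, and to exploit the following elementary but key observation: for \emph{any} binomial $X^\alpha - X^\beta \in R$, the condition $X^\alpha - X^\beta \in \fp$ (equivalently $\phi(X^\alpha) = \phi(X^\beta)$) and the condition that $X^\alpha - X^\beta$ be homogeneous in the weighted grading $\deg X_i = m_i$ both boil down to the single equality $\sum \alpha_i m_i = \sum \beta_i m_i$. In particular $(2) \Leftrightarrow (3)$ will follow at once.

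Concretely, I would first compute $\phi(f_{ij}) = t^{m_i + m_{j+1}} - t^{m_{i+1} + m_j}$ and note that this vanishes exactly when $m_i + m_{j+1} = m_{i+1} + m_j$, which is also exactly when $f_{ij}$ is homogeneous for $\deg X_i = m_i$. Since $I_2(A)$ is generated by the $f_{ij}$, both $(2)$ and $(3)$ are equivalent to the system of equalities
$$m_{j+1} - m_j = m_{i+1} - m_i \quad \text{for all } 0 \leq i < j \leq n-1.$$

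The implication $(1) \Rightarrow (2), (3)$ is then transparent: if $m_i = m_0 + id$ for all $i$, every consecutive difference equals $d$ and the displayed equalities hold trivially. For the converse, I would specialize the equality to the case $j = i+1$ for $i = 0, \ldots, n-2$, obtaining $m_{i+2} - m_{i+1} = m_{i+1} - m_i$; this forces all consecutive differences to coincide with the common value $d := m_1 - m_0$, which gives condition (1).

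There is really no obstacle here beyond bookkeeping with the index conventions in the matrix $A$; the only conceptual ingredient is the binomial observation that pins $\fp$-membership to weighted homogeneity, everything else is immediate arithmetic.
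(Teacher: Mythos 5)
Your proposal is correct and is essentially the paper's own proof: both reduce (2) and (3) to the equalities $m_i+m_{j+1}=m_{i+1}+m_j$ coming from the $2\times 2$ minors $X_iX_{j+1}-X_{i+1}X_j$ of $A$, and then recover a common difference $d=m_1-m_0$ (the paper specializes to $i=0$, you to $j=i+1$; this is an immaterial variation). The only point worth a word — which the paper also leaves implicit — is the forward direction of your claim that (3) is equivalent to those equalities: a homogeneous ideal need not have homogeneous elements in a prescribed generating set, but here a non-homogeneous minor would put its homogeneous components, i.e.\ the monomials $X_iX_{j+1}$ and $X_{i+1}X_j$, into $I_2(A)$, which is impossible since $I_2(A)$ is the prime toric ideal of the rational normal curve and contains no monomials.
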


\begin{proof}
As we already recalled, if $m_i=m_0+id$ for some $d\geq 1$ then
$I_2(A)\subset \fp$ and $I_2(A)$ is homogeneous w.r.t. the weighted
gradation, so (1) $\Rightarrow$ (2) and (1) $\Rightarrow$ (3).
Conversely, assuming that either (2) or (3) holds, one has that the
integers in the sequence $(\mm)$ satisfy that
$m_i+m_{j+1}=m_{i+1}+m_j$ for all $i,j$ such that $0\leq i<j\leq
n-1$. In particular, for all $j\in [1,n-1]$, one has that
$m_0+m_{j+1}=m_{1}+m_j$, i.e., $m_{j+1}=m_j+d$ if one sets
$d:=m_1-m_0\geq 1$.
\end{proof}

In this paper, homogeneous and graded will mean homogeneous and
graded with respect to the weighted gradation on $R$ given by
$\deg{x_i}=m_i$ for all $i\in[0,n]$. By Lemma~\ref{equivGrad},
$I_2(A)$ is homogeneous and $\fp$ is also homogeneous since
$\fp:=\ker{\phi}$ and the map $\phi:R\rar k[t]$ given by
$\phi(X_i)=t^{m_i}$ is graded of degree 0.

\subsection{The weighted graded version of the Eagon-Northcott
complex}\label{secENgraded}

The minimal resolution of $R/I_2(A)$ is given by the Eagon-Northcott
complex of the matrix $A$ because the height of $I_2(A)$ is $n$.

\medskip

Let $F= \oplus _{i=1}^n R e_i$ be a free $R$-module of rank $n$ with
basis $e_1, \ldots, e_n$ and $G = Rg_1\oplus Rg_2$ be a free
$R$-module of rank 2. The $2\times n$ matrix $A$ represents a map
$\phi: F\lar G^*$. The Eagon-Northcott complex ${\bf E}$ of the
matrix $A$ is
$$
{\bf E}:\ 0\lar E_{n-1} \stackrel{d_{n-1}}{\lar}  E_{n-2}
\stackrel{d_{n-2}}{\lar} \cdots\lar E_1\stackrel{d_{1}}{\lar} E_0\,,
$$
with $E_0=R$ and, for all $i\in [1,n-1]$, $E_i :=\wedge
^{i+1}F\otimes D_{i-1}G$ and $d_i$ is given by the diagonalization
of $\wedge F$ and multiplication of $DG$.

\begin{remark}\label{dualENrmk}
Note that $R/I_2(A)$ is Cohen-Macaulay and the complex ${\bf E }^*$
is exact. Thus, we get
$$
{\bf E}^*:\ 0\lar R   \stackrel {d_1^*}{\lar} E_1^* \lar \cdots
\stackrel { d_{n-2}^* }{\lar}\wedge ^{n-1}F^* \otimes
D_{n-3}G^*\stackrel { d_{n-1}^* }{\lar} \wedge^nF^*\otimes
D_{n-2}G^*.
$$
One has that $d_1^*:R\to \wedge ^2 F^*$ is the map $\wedge ^2
(\phi^*)$, and for $i>1$, the map $d_i^*: \wedge ^iF^*\otimes
D_{i-2}G^* \lar \wedge ^{i+1}F^*\otimes D_{i-1}G^*$ is given by
 $$d_i^*(x\otimes y) = \sum _{t=1}^n x\wedge e_t^* \otimes \phi(e_t)y.$$
After identifying $\wedge ^nF^*$ with $R$ and $\wedge ^{n-1}F^*$
with $F$, we see that $$d_{n-1}^*(e_i \otimes
g_1^{*(r)}g_2^{*(s)})=(-1)^{n-i}[
X_{i-1}g_1^{*(r+1)}g_2^*{(s)}+X_{i}g_1^{*(r)}g_2^{*(s+1)}].
$$
This will be useful in Section~\ref{secgorenstein}.
\end{remark}

The ideal $I_2(A)$ is homogeneous with respect to the usual grading
on $R$ and the Eagon-Northcott complex is indeed a minimal graded
free resolution of $R/I_2(A)$. This minimal graded resolution
 is 2- linear and it is as follows:
 $$
{\bf E}:\ 0\rar R^{n-1}(-n)\stackrel{d_{n-1}}{\lar}
R^{(n-2){n\choose n-1}}(-n+1) \stackrel{d_{n-2}}{\lar} \cdots
\stackrel{d_{s}}{\lar} R^{(s-1){n\choose s}}(-s)
\stackrel{d_{s-1}}{\lar} \cdots
 $$
\begin{flushright}
$\displaystyle{\cdots \stackrel{d_{2}}{\lar} R^{n\choose 2}(-2)
\stackrel{d_{1}}{\lar} R \rar R/I_2(A)\rar0. }$
\end{flushright}
But $I_2(A)$ is also homogeneous with respect to our weighted
gradation on $R$ as observed in Lemma~\ref{equivGrad} and the
Eagon-Northcott complex is also a minimal graded free resolution of
$R/I_2(A)$ with respect to this weighted gradation. Of course,
syzygies are no longer concentrated in one single degree at each
step of the resolution as before. As observed in \cite{hip}, the
successive graded free modules in this resolution are $E_0=R$ and,
for all $s\in [2,n]$,
\begin{eqnarray*}
E_{s-1}&=&\bigoplus_{1\le r_1<\ldots < r_s \le n }
 \left(\bigoplus_{k=1}^{s-1} R(- sm_0 +kd
 -\sum r_i d)\right)\\
&=&
 \bigoplus_{k=1}^{s-1}
 \left(\bigoplus_{1\le r_1<\ldots < r_s \le n} R(- (sm_0 -kd
 +\sum r_i d))\right)\\
&=&
 \bigoplus_{k=1}^{s-1}
 \left(\bigoplus_{0\le r_1<\ldots < r_s \le n-1} R(- (sm_0 +(s-k)d
 +\sum r_i d))\right)\\
&=&
 \bigoplus_{k=1}^{s-1}
 \left(\bigoplus_{0\le r_1<\ldots < r_s \le n-1} R(- (sm_0 +kd
 +\sum r_i d))\right)\\  \,.
\end{eqnarray*}

\begin{notation}
Given two integers $m\geq t\geq 1$, it will be useful to denote by
$\sigma(m,t)$ the collection (with repetitions) of all possible sums of $t$ distinct
nonnegative integers which are all strictly smaller than $m$, i.e.,
$$
\sigma (m,t) = \{ \sum _ {0\le r_1<r_2<\cdots <r_t \le m-1} r_i
\}\,.
$$
For instance, $\sigma (4,2) =
\{1,2,3,3,4,5\}$. Note that for all $t$ and $m$ with $1\leq t\leq
m$, $\#\sigma (m,t)={m\choose t}$.
\end{notation}

\medskip

The weighted graded free resolution of $R/I_2(A)$ given by the
Eagon-Northcott complex can now be written as follows:
 {\small
$$
 0\rar
 \bigoplus_{k=1}^{n-1}R(- (nm_0+kd + {n\choose 2} d ))
 \lar\cdots\lar
 \bigoplus_{k=1}^{s-1}
 \left(\bigoplus_{r\in\sigma(n,s)}
 R(- (sm_0 +kd+rd))\right)\lar\cdots
$$
\begin{flushright}
$ \cdots\lar\displaystyle{
 \bigoplus_{r\in\sigma(n,2)}R(-(2m_0+d+r d))
 \lar R \lar R/I_2(A)\rar 0\,. }$
\end{flushright}
}

\subsection{Mapping cone}\label{subsecMappingCone}

In this section, we will establish our notation for mapping cones,
complexes and some facts on mapping cones that we will need.

\medskip

For any complex ${\bf F} = \oplus F_i$, denote by $(\delta_{\bf
F})_i:F_i\rar F_{i-1}$ the boundary maps of ${\bf F}$, and for any
$t\geq 1$, let ${\bf F^t}$ be the complex whose $i$th term is
$(F^t)_i := F_{i-t}$. Now if $\mu: {\bf F}\rar {\bf G}$ is a map of
complexes, the {\it mapping cone} (or {\it cone}) over $\mu$ is the
complex ${\bf G}\oplus {\bf F^1}$ and it is denoted by $\cone{\mu}$.
The boundary maps of this complex are

$$
\begin{array}{rccl}
 \left(
 \begin{array}{cc}
 (\delta_{\bf G})_i&(-1)^{i}\mu_{i-1}\\
 0&(\delta_{\bf F})_{i-1}
 \end{array}
 \right)
 :&
 G_i\oplus F_{i-1}&\rar &G_{i-1}\oplus
F_{i-2}\,,
 \end{array}
 $$

 \medskip\noindent
i.e., $(\delta _{\cone {\mu}})_i(g_i,f_{i-1})=((\delta_{\bf G})_i(g_i)+(-1)^i \mu
_{i-1}(f_{i-1}),(\delta_{\bf
 F})_{i-1}(f_{i-1}))$.

\bigskip

Let's recall a few  well known  facts on mapping cones that we will
need in the sequel. If ${\bf G}$ is acyclic, i.e., $H_i({\bf G})= 0$
for $i\ge 1$,     then $\cone {\mu}$ is exact  up to degree 1, i.e.,
$H_i(\cone {\mu}) = 0$ for all $i\ge 2$. When ${\bf G}$ is exact
and moreover $\mu_0$ is injective, then $\cone {\mu}$ is acyclic. A
situation of special interest is when $\bf F$ is a resolution of
$R/J$ and $\bf G$ a resolution of $R/I$ for two  ideals $I$ and $J$
in $R$. Then,  given a map of complexes $\mu: {\bf F}\rar {\bf G}$,
$\cone {\mu}$ resoves $R/I+\mu_0(R)$ provided $\mu_0 (J)$ is
contained in $I$. In particular,  consider the following situation:
let $I$ be an ideal in $R$ and take an element $z\in R$. Then,
$$
0\rar R/(I:z) \stackrel {\mu} \lar R/I \lar R/I+(z)\rar 0
$$
is exact, where $\mu$ is the map given by multiplication by $z$. Now if
${\bf F}$ resolves $R/(I:z)$, ${\bf G}$ resolves $R/I$, and $\mu :{\bf
F} \rar {\bf G}$ is a map of complexes induced by $\mu$, then
$\cone{\mu}$ resolves $R/I+(z)$.

\medskip

Let's consider now the graded version of the previous statements. Assume
that $I$ and $J$ are homogeneous ideals in $R$, and consider $\bf F$
a graded resolution of  $R/J$, $\bf G$ a graded resolution of
$R/I$,  and $\mu: {\bf F}\rar {\bf G}$ a graded map of complexes
with $\mu (J)\subset    I$. Then, the exact complex $\cone {\mu}$ is
the graded resolution of $R/I+\mu_0(R)$. In the particular case where $J=(I:z)$ for some homogeneous element $z\in R$ of degree $\delta$, the degree zero map $\mu: R(-\delta)\rar R$ given by multiplication by $z$ induces a graded map of complexes $\mu: {\bf F}(-\delta)\rar {\bf G}$ and $\cone{\mu}$ is a graded free resolution of $R/I+(z)$.
Recall that a resolution
$\bf F$ of $R/I$ is {\it minimal}  if the ranks of the $F_i$'s are
minimal or, equivalently, if $(\delta _{\bf F}({\bf F}))\subset
\gothm{\bf F}$ where $\gothm = (x_0, \ldots, x_n)$ is the irrelevant
maximal ideal. Thus, in a minimal graded resolution, there are no
degree zero components in the resolution unless they are identically
zero.  If $\bf F$ and  $\bf G$ are minimal graded free resolutions of $R/I:z$ and $R/I$ respectively, then
the only possible obstructions for $\cone{\mu}$ to be minimal are
the degree zero components in $\mu$.
In other words, if  $\bf F = \oplus F_i $, $\bf G = \oplus G_i$ with
$F_i = \bigoplus _{j} R(-d_{ij})$ and $G_i = \bigoplus _j
R(-c_{ij})$, and $\mu: {\bf F}\rar {\bf G}$ is the graded map of
complexes induced by  multiplication by $z$, $R(-\delta)\rar R$, then $\cone {\mu}$ is a minimal graded free resolution of $R/I+(z)$ provided whenever $d_{ij} = c_{ij}$ for
some $i$ and $j$, the projection of the restriction map
$\mu _i|_{R(-d_{ij})}$
onto $R(-c_{ij})$ is identically zero.
If it is not zero, we can split off or cancel the
same two summands $R(- d_{ij})$  from both $F_i$ and $G_i$ in the
mapping cone construction to achieve minimality.

\begin{definition}
The {\it minimized mapping cone} of the map of complexes $\mu:{\bf F}\rar {\bf G}$, denoted by
$\mincone{\mu}$, is the complex obtained from $\cone{\mu}$ after splitting off all possible summands.
\end{definition}

If $z\in R$ is homogeneous element of degree $\delta$, $\bf F$ and $\bf G$ are minimal graded free resolutions of $R/I:z$ and $R/I$ respectively, and $\mu: {\bf F}(-\delta)\rar {\bf G}$ is the graded map of complexes  induced by multiplication by $z$, then
$\mincone{\mu}$ is a minimal graded free resolution of $R/I+(z)$.

\medskip

We finally mention the following result that we will need later on.

\begin {proposition}\label{sumcomplex}
Let
$${\bf F} : 0 \rar F_s\stackrel{f_s}{\lar} F_{s-1}
 \stackrel{f_{s-1}}{\lar} \cdots \lar F_t\stackrel{f_t}{\lar} \cdots \lar F_1
\stackrel{f_1}{\lar} F_0$$ and
$${\bf G }: 0 \rar G_s\stackrel{g_s}{\lar} G_{s-1}
 \stackrel{g_{s-1}}{\lar} \cdots \lar G_t\stackrel{g_t}{\lar} \cdots \lar G_1
\stackrel{g_1}{\lar} G_0$$ be two exact complexes of free modules and $\phi = \oplus
\phi_t : \bf F \to \bf G$ be a map of complexes.  Suppose that the
dual $\bf F^*$ is exact.  If $\phi_s = 0$ then we have a homotopy to
$\phi$ given by $\psi_i:F_{i-1}\to G_{i}$ for $1\le i\le s$ with
$\psi_s = 0$ and $\phi_i =  g_{i+1}\circ
\psi_{i+1} +(-1)^{s-i}\psi_i \circ f_i $. In particular,  $\phi_0(F_0) \subset   g_1(G_1)$.
\end{proposition}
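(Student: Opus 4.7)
The plan is to construct the homotopy $\psi_i$ by descending induction on $i$, starting from the prescribed $\psi_s:=0$ and ending at $\psi_1$. The base case is immediate: at level $s$, both $\psi_s$ and the putative $\psi_{s+1}$ (whose target $G_{s+1}$ is zero) vanish, so the homotopy equation collapses to $\phi_s=0$, which is exactly our hypothesis.

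For the inductive step, assume that $\psi_{i+1},\ldots,\psi_s$ have been constructed so as to satisfy the homotopy equation at levels $i+1,\ldots,s$. Form the obstruction $\alpha_i:=\phi_i-g_{i+1}\psi_{i+1}:F_i\rar G_i$; finding $\psi_i:F_{i-1}\rar G_i$ such that the level-$i$ equation holds amounts to writing $\alpha_i$ as a precomposition with $f_i$ (up to the sign $(-1)^{s-i}$, which is irrelevant for existence). A short calculation combining the chain-map identity $\phi_i f_{i+1}=g_{i+1}\phi_{i+1}$, the inductive formula for $\phi_{i+1}$, and $g_{i+1}g_{i+2}=0$ yields $\alpha_i\circ f_{i+1}=0$. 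Interpreted inside the complex $\mathrm{Hom}_R({\bf F},G_i)$, this says that $\alpha_i$ is a cocycle.

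This is where the hypothesis that ${\bf F}^*$ be exact becomes essential. Since $G_i$ is free (hence flat), $\mathrm{Hom}_R({\bf F},G_i)\cong G_i\otimes_R{\bf F}^*$ is an exact complex, so the cocycle $\alpha_i$ is necessarily a coboundary: there exists $\psi_i:F_{i-1}\rar G_i$ with $\psi_i\circ f_i=\alpha_i$, closing the induction. The in-particular statement follows by specializing the homotopy equation to $i=0$: the term $\psi_0 f_0$ is vacuous (there is no $F_{-1}$), so the equation reduces to $\phi_0=g_1\psi_1$, whence $\phi_0(F_0)\subset g_1(G_1)$. The main obstacle is precisely the lifting step: in the absence of the exactness of ${\bf F}^*$, the identity $\alpha_i\circ f_{i+1}=0$ would only say that $\alpha_i$ factors through $\mathrm{im}(f_i)\subset F_{i-1}$, and extending such a factorization to all of $F_{i-1}$ would generically require $\mathrm{im}(f_i)$ to be a direct summand, a condition that fails in general; the duality hypothesis circumvents this obstacle cleanly.
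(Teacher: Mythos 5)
Your proposal is correct and follows essentially the same route as the paper: a descending induction constructing the homotopy from $\psi_s=0$, with the obstruction at each stage killed by $f_{i+1}$ (via the chain-map identity and the previously established relation) and then lifted using the exactness of ${\bf F}^*$ — you phrase this via exactness of $\mathrm{Hom}_R({\bf F},G_i)\cong G_i\otimes_R{\bf F}^*$, while the paper dualizes the maps directly and lifts through $f_{i-1}^*$ using freeness of the $G_j^*$, which is the same mechanism. Your dismissal of the signs matches the paper's own loose sign bookkeeping, and your treatment of the $i=0$ case (the cocycle at spot $0$ must vanish since $F_{-1}=0$, giving $\phi_0=g_1\psi_1$ up to sign) is exactly how the paper's final claim $\phi_0(F_0)\subset g_1(G_1)$ is obtained.
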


\begin{proof}
Let $\psi_s:F_{s-1}\to G_s$ be the zero map.  Since,  $\phi_s= 0$, we get,  $\phi_{s-1}\circ f_s=0$ and hence $f_s^*(\phi _{s-1}^*)=0$.  By the exactness of the dual of ${\bf F}$, there exist a map $\psi_{s-1}^*:G_{s-1}^*\to F_{s-2}^*$ such that $f_{s-1}^*\circ \psi_{s-1}^*= \phi_{s-1}^*$.  So, we get, $\phi_{s-1} = \psi _{s-1}\circ f_{s-1}-g_s\circ \psi_s$.  Suppose that we have constructed $\psi_t$ such that, $\phi_i =  g_{i+1}\circ\psi{i+1} +(-1)^{s-i}\psi_i \circ f_i $ for all $i \ge  t$.    Now, since $\phi$ is a map of complexes, we have
$  \phi_{t-1}\circ f_t= g_{t}\circ \phi_{t}$.   Substituting, we get $ \phi_{t-1}\circ f_t =  g_{t}\circ (g_{t+1}\circ \psi{t+1}+(-1)^{s-t} \psi_{t}\circ f_{t}  )=  (-1)^{s-t}g_t \circ \psi_t \circ f_t$.  Thus $(\phi_{t-1}+(-1)^{s-t+1}g_t\circ \psi_t)\circ f_t=0$.  That is, $f_t^*((\phi_{t-1}^*+(-1)^{s-t+1} \psi_t^*\circ g_t^* )(G_{t-1}^*)=0$.
By the exactness of the dual ${\bf F^*}$, we get a map $\psi_{t-1}^*:G_{t-1}^*\to F_{t-2}^*$ such that $\phi_{t-1}^*+(-1)^{s-t+1} \psi_t^*\circ g_t^*  = f_{t-1}^*\circ \psi_{t-1}^*$ and hence
$\phi_{t-1}= g_t \circ \psi_t  +(-1)^{s-t+1} \psi_{t-1} \circ f_{t-1} $.  Thus we prove the existence of $\psi_i$ for all $i$.  Looking at $\psi_1:F_0\to G_1$, we get, $\phi_0(F_0) = g_1 \circ \psi_1(F_0) \subset g_1(G_1)$.

\end{proof}

\section{Gorenstein ideals}\label{secgorenstein}

In this section we deal with the case when $k[\Gamma(\mm)]$ is Gorenstein separately and see that an explicit construction of the minimal free resolution is obtained from one single mapping cone construction using the fact observed in Section~\ref{defidealsection} that $\fp=I_2(A)+I_2(B)$.
Note that the Gorenstein case will also fit into the general construction of iterated mapping cone given in Section~\ref{secmain} as we shall see in Remark~\ref{rmkGor}. The proof we provide in this section completes the argument presented in \cite[Theorem~2.6]{hip} for the case $n=4$.

\medskip

The explicit computation of the Cohen-Macaulay type of $k[\Gamma(\mm)]$ in
\cite[Corollary~6.2]{patseng} under the more general assumption of almost arithmetic sequences implies that if
$\mm$ is an arithmetic sequence then $k[\Gamma(\mm)]$ is Gorenstein if and only if $b=2$.

\medskip

So let's assume that $m_0 \equiv 2 \mod n$ and write $m_0 = an+2$. Then,
$$B= {\left(\begin{array}{cccc}
\begin{array}{c}
X_{n}^{a}\\[1.5mm]
X_{0}^{a+d}\\[1.5mm]
\end{array} &
\begin{array}{c}
X_{0}\\[1.5mm]
X_{2}\\[1.5mm]
\end{array} &
\begin{array}{c}
\cdots\\[1.5mm]
\cdots\\[1.5mm]
\end{array} &
\begin{array}{c}
X_{n-2}\\[1.5mm]
X_{n}\\[1.5mm]
\end{array}
\end{array}\right)}.$$
The ideals $I_2(A)$ and $I_2(B)$ are both of height $n-1$ and the interesting fact is that
the ideal $\fp = I_2(A)+I_2(B)$ is Gorenstein of height $n$.

\medskip

We will construct a resolution for $R/\fp$ as follows. We start with
the following preliminary lemma. Consider the map
\begin{equation}\label{alpha}
\alpha: D_{n-2}G^* \cong R^{n-1}\lar R\,,\quad
g_1^{(i)}g_2^{(n-2-i)}\mapsto ( -1)^i [
X_n^{a}X_{i+2}-X_0^{a+d}X_i]\,.
\end{equation}

\begin{lemma}\label{Alpha1}
Let ${\bf E}$ be the Eagon-Northcott complex which is a minimal
resolution of $R/I_2(A)$ and $\bf {E}^*$ be as in
Section~\ref{secENgraded}. Then the map $\alpha:\,E_{n-1}^* \lar R$
defined in {\rm(}\ref{alpha}{\rm)} induces a map of complexes
$\alpha:\ {\bf E}^*\to \bf E$.
\end{lemma}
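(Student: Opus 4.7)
The plan is to define $\alpha_0 := \alpha$ and construct graded lifts $\alpha_i \colon E^*_{n-1-i} \to E_i$ for $i = 1, \ldots, n-1$ by induction on $i$, subject to $d_i \circ \alpha_i = \alpha_{i-1} \circ d^*_{n-i}$. Since each $E^*_{n-1-i}$ is free and ${\bf E}$ is exact in positive homological degrees, the only thing one has to verify at each step is that the composition $\alpha_{i-1} \circ d^*_{n-i}$ already takes values in $\mathrm{im}(d_i)$; given this, the lift exists by the usual projective-lifting argument. Only the base case $i = 1$ contains real content, and this is where I would concentrate the computation.

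For the base case I would use the explicit description of $d^*_{n-1}$ recorded at the end of Remark~\ref{dualENrmk} to evaluate $\alpha \circ d^*_{n-1}$ on a typical basis element $e_i \otimes g_1^{*(r)} g_2^{*(s)}$ of $E^*_{n-2}$, where $r + s = n-3$. After combining with (\ref{alpha}), the four resulting monomials regroup, up to an overall sign, into
\[
X_n^{a}\bigl(X_{i-1}X_{r+3} - X_i X_{r+2}\bigr) \;-\; X_0^{a+d}\bigl(X_{i-1}X_{r+1} - X_i X_r\bigr).
\]
Both binomials in parentheses are $2 \times 2$ minors of the catalecticant matrix $A$, so the whole expression lies in $I_2(A) = \mathrm{im}(d_1)$, and $\alpha_1$ exists. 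Keeping track of the alternating signs coming from the $(-1)^i$ in (\ref{alpha}) and the $(-1)^{n-i}$ in the formula for $d^*_{n-1}$ is really the main source of care.

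Given $\alpha_{i-1}$, the inductive step is formal: one computes
\[
d_{i-1} \circ \alpha_{i-1} \circ d^*_{n-i} \;=\; \alpha_{i-2} \circ d^*_{n-i+1} \circ d^*_{n-i} \;=\; 0
\]
because $(d^*)^2 = 0$. Hence $\alpha_{i-1} \circ d^*_{n-i}$ factors through $\ker d_{i-1} = \mathrm{im}(d_i)$ by exactness of ${\bf E}$ at $E_{i-1}$, and freeness of $E^*_{n-1-i}$ provides $\alpha_i$. Throughout, since $\alpha$ is homogeneous of a single weighted degree (its image consists of the minors of $B$ involving the first column, all of weighted degree $am_n + m_2 = (a+d)m_0 + m_0$, and so on) and every differential of ${\bf E}$ and ${\bf E}^*$ is weighted-homogeneous, each $\alpha_i$ can be chosen homogeneous of the same total internal shift; this graded refinement will be needed to make \cone{\alpha} into a graded free resolution in Section~\ref{secgorenstein}.
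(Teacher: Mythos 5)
Your proof is correct and follows essentially the same route as the paper: the one step with real content is the explicit check, via the formula for $d_{n-1}^*$ in Remark~\ref{dualENrmk}, that $\alpha\circ d_{n-1}^*$ regroups into combinations of $2\times 2$ minors of $A$ and hence lands in $I_2(A)=\mathrm{im}(d_1)$, after which the remaining lifts exist by the standard comparison argument using exactness of ${\bf E}$ and freeness of the modules in ${\bf E}^*$ (the paper compresses this lifting into a single sentence). One cosmetic quibble: the elements $X_n^aX_{i+2}-X_0^{a+d}X_i$ in the image of $\alpha$ do not all have the same weighted degree (they differ by multiples of $d$), but since the generators of $E_{n-1}^*$ sit in correspondingly staggered degrees the map $\alpha$ is still graded, so your concluding remark about gradedness stands.
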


\begin{proof}
Consider the basis element $e_t\otimes g_1^{(i)}g_2^{(n-3-i)}$ in
$\wedge ^{n-1}F^*\otimes D_{n-3}G^*$ where $e_t$ denotes
($(-1)^{n-i}  e_1^*\wedge \cdots e_{t-1}^*\wedge e_{t+1}^*\wedge
\cdots e_{n}^*$.  Then
$${\small
\begin{array}{rcl}
\alpha (d_{n-1}^* (e_i \otimes g_1^{(r)}g_2^{(s)}))
 & = & ( -1)^{i+1} X_{t-1}[ X_n^{a}X_{i+3}-X_0^{a+d}X_{i+1}]+(-1)^iX_t  [ X_n^{a}X_{i+2}-X_0^{a+d}X_i] \\
& = & (-1)^{i+1} (X_n^a [X_{t-1}X_{i+3}-X_{t }X_{i+2}] +X_0^{a+d}[X_{t-1}X_{i+1}-X_tX_{i}]) \\
& \in & I_2(A)\,.\\
\end{array}}$$
So, the composition $\wedge ^{n-1}F^*\otimes D_{n-3}G^* \lar
D_{n-2}G^* \stackrel{\alpha}{\lar}R \lar R/I_2(A)$ is zero.   Since
$E$ is exact, we can lift the map $\alpha$ to $\alpha: E^*\to E$ as
a map of complexes.
\end{proof}

\begin {theorem}{\label {minresgor}}
Let $\mm=(m_0, \ldots , m_n)$ be an arithmetic sequence with $m_0
\equiv 2 \mod n$. If $\fp\subset R$ is the defining ideal of the
monomial curve $C_\mm\subset\A_k^{n+1}$ associated to the sequence
$\mm$, then $R/\fp$ is Gorenstein of codimension $n$ with minimal
resolution given by ${\bf E} \oplus ({\bf E}^*)^{\bf 1}$ where $E$
is the Eagon-Northcott resolution of $R/I_2(A)$.
\end {theorem}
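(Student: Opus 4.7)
The plan is to realize the minimal free resolution of $R/\fp$ as the mapping cone $\cone{\alpha}$ of the chain map $\alpha:{\bf E}^*\to{\bf E}$ constructed in Lemma~\ref{Alpha1}. As a bigraded module this cone is exactly ${\bf E}\oplus({\bf E}^*)^{\bf 1}$, so the task is to verify that it is (i) a free resolution of $R/\fp$ and (ii) already minimal.

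First I would rewrite $\fp$ as $I_2(A)+(z_0,\ldots,z_{n-2})$, where $z_i:=(-1)^i(X_n^a X_{i+2}-X_0^{a+d}X_i)=\alpha(g_1^{(i)}g_2^{(n-2-i)})$. The $2\times 2$ minors of $B$ split into those involving its first column (which up to sign are the $z_i$) and those built from two ordinary columns, of the form $X_iX_{j+2}-X_{i+2}X_j$ for $0\leq i<j\leq n-2$. A short telescoping argument using $X_rX_{s+1}\equiv X_{r+1}X_s\pmod{I_2(A)}$ places each such minor inside $I_2(A)$, so only the $z_i$ contribute new generators.

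Next I would compute the homology of $\cone{\alpha}$. The complex ${\bf E}$ is acyclic in positive degrees, and since $R/I_2(A)$ is Cohen-Macaulay of codimension $n-1$, the cohomology of ${\bf E}^*=\mathrm{Hom}({\bf E},R)$ is concentrated at the top, where $\mathrm{Ext}^{n-1}_R(R/I_2(A),R)=:\omega$ is (a shift of) its canonical module. In chain-complex conventions this means $H_j({\bf E}^*)=0$ for $j\geq 1$ and $H_0({\bf E}^*)\cong\omega$. The short exact sequence $0\to{\bf E}\to\cone{\alpha}\to({\bf E}^*)^{\bf 1}\to 0$ and the associated long exact sequence give $H_i(\cone{\alpha})=0$ for $i\geq 2$ together with
$$
0\lar H_1(\cone{\alpha})\lar \omega\stackrel{\bar\alpha}{\lar} R/I_2(A)\lar H_0(\cone{\alpha})\lar 0,
$$
where $\bar\alpha$ is the map on $H_0$ induced by $\alpha$. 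The key step is the injectivity of $\bar\alpha$. The ring $R/I_2(A)$ is the homogeneous coordinate ring of the cone over the rational normal curve, hence a Cohen-Macaulay normal domain of dimension $2$; both $\omega$ and $R/I_2(A)$ are rank-one torsion-free modules over it, so the kernel of any nonzero morphism between them must be torsion, hence zero. The map $\bar\alpha$ is nonzero, for otherwise $\alpha_0(E_{n-1}^*)\subset I_2(A)$, forcing $\fp=I_2(A)$ and contradicting $\mathrm{ht}(\fp)=n>n-1=\mathrm{ht}(I_2(A))$. Thus $H_1(\cone{\alpha})=0$ and, by Step~1, $H_0(\cone{\alpha})=R/\fp$, so $\cone{\alpha}$ is a free resolution of $R/\fp$ of length $n$, confirming in particular that $R/\fp$ is Cohen-Macaulay of codimension $n$.

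Finally I would verify minimality. The entries of $\alpha_0$ are the polynomials $z_i$, homogeneous of strictly positive weighted degree and thus in $\gothm$. For $\alpha_i$ with $i\geq 1$, inspection of the explicit graded shifts recalled in Section~\ref{secENgraded} shows that, after the global shift of ${\bf E}^*$ making $\alpha$ a degree-zero map of graded complexes, no free summand of the source $E_{n-1-i}^*$ shares its shift with any summand of the target $E_i$; every entry of a graded lift of $\alpha_i$ is therefore homogeneous of strictly positive degree and lies in $\gothm$. Hence $\cone{\alpha}=\mincone{\alpha}$ is the minimal graded free resolution of $R/\fp$. The manifest self-duality of its ranks and shifts, combined with the type computation of \cite{patseng} recalled at the start of the section, confirms the Gorenstein property. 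I expect the minimality verification to be the main obstacle, since although conceptually transparent it requires careful bookkeeping of the weighted degrees arising at every level of $\alpha$.
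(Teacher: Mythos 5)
Your overall strategy coincides with the paper's: both realize the resolution as the mapping cone of the chain map $\alpha:{\bf E}^*\to{\bf E}$ from Lemma~\ref{Alpha1}, identify $H_0$ with $R/\fp$ by checking that the minors of $B$ not involving its first column already lie in $I_2(A)$, and then argue minimality by degree considerations. Your treatment of acyclicity is in fact more detailed than the paper's: reducing everything, via the long exact sequence of the cone, to the injectivity of the induced map $\omega\to R/I_2(A)$, and proving that injectivity by rank-one torsion-freeness over the normal domain $R/I_2(A)$, is correct and fills in a step the paper passes over with ``hence the mapping cone is exact''.

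The gap is in the minimality step. Your justification for $\alpha_i$, $i\ge 1$, is that after the global twist no free summand of the source $E_{n-1-i}^*$ shares its shift with any summand of the target $E_i$, so that all entries have positive degree automatically. This is false in general. Take $n=9$ and $\mm=(11,12,\ldots,20)$, so $a=d=1$, $b=2$, and the global twist making $\alpha$ degree-preserving is $c=(a+d+1+n)m_0+({n\choose 2}+n-1)d=176$. The summands of $E_4$ have shifts $-(55+k+r)$ with $k\in[1,4]$ and $r\in\sigma(9,5)\subseteq[10,30]$, hence include $R(-87)$ (take $k=2$, $r=30$); the summands of $({\bf E}^*)_4=E_4^*(-176)$ have shifts $-(121-k-r)$ and also include $R(-87)$ (take $k=4$, $r=30$). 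So the block $\alpha_4$ connects two copies of $R(-87)$, and degree reasons alone do not place its entries in $\gothm$; such coincidences occur whenever $(a+d)m_0$ is a multiple $td$ of $d$ with $t$ at most roughly $n^2/4$, e.g.\ for $a=d=1$ and all $n\ge 9$. Since the diagonal blocks ${\bf E}$ and ${\bf E}^*$ are minimal, the cone is minimal exactly when every $\alpha_i$ has image in $\gothm E_i$, so your planned ``bookkeeping of weighted degrees'' cannot be completed as stated: you must either show that the degree-zero components of (some, equivalently any, graded lift of) $\alpha$ actually vanish, or obtain minimality another way, for instance by comparing the ranks $i{n\choose i+1}+(n-i){n\choose i-1}$ of the cone with the Betti numbers of the minimal resolution constructed independently in Section~\ref{secmain} (they agree, which is how Remark~\ref{rmkGor} reconciles the two constructions).
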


\begin{proof}
As we already recalled in Section~\ref{secprelim}, $\fp
=I_2(A)+I_2(B)$, and ${\bf E}\to R / I_2(A)\to 0$
is a minimal resolution of $R/I_2(A)$. Let ${\bf E}: 0\to R \to
E_1^* \to \cdots \to E_{n-2}^*\to E_{n-1}^*$ be its dual. If
$\alpha: R^{n-1} = E_{n-1}^* \to R$ is the map defined in
(\ref{alpha}), one has by Lemma~\ref{Alpha1} that $\alpha$ induces a
map of complexes ${\bf \alpha} :\bf E^* \lar \bf E$. Hence the
mapping cone ${\bf E} \oplus ({\bf E}^*)^{\bf 1}$ is exact.  Note
that the image of $\alpha$ is the ideal generated by the $n-1$
principal $2\times 2$ minors of $B$ and $H_0({\bf E} \oplus {\bf
E}^*) = \hbox{Im} (d_1)+\hbox{Im}(\alpha) = I_2(A)+I_2(B)$ because
the rest of the minors of $B$ are already in the ideal $I_2(A)$. So,
${\bf E}\oplus ({\bf E}^*)^{\bf 1}$ resolves $R/\fp$ and it is
minimal because $\alpha $ has positive degree.
\end{proof}

Since for $i\in[1,n-1]$, the rank of the free module $E_i$ is
$i{n\choose {i+1}}$, we immediately get the Betti numbers of
$R/\fp$:

\begin{corollary}\label{bettiGor}
Let $\mm = (m_0, \ldots , m_n)$ be an arithmetic sequence with $m_0
\equiv 2 \mod n$.  Then the Betti numbers of $R/\fp$, where
$\fp\subset\kx$ is the defining ideal of the monomial curve $C_\mm$
associated to the sequence $\mm$, are $\beta_0=1$, $\beta_i =
i{n\choose {i+1}}+(n-i){n\choose {i-1}}$ for $i\in [1,n-1]$, and
$\beta_n=1$.
\end{corollary}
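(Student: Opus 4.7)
The plan is to invoke Theorem~\ref{minresgor}, which already furnishes an explicit minimal graded free resolution of $R/\fp$ as the direct sum ${\bf E} \oplus ({\bf E}^*)^{\bf 1}$. Once a minimal resolution is in hand, the total Betti numbers are just the ranks of the free modules in each homological degree, so the entire corollary reduces to rank bookkeeping in that mapping-cone resolution.

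The first step will be to read off the ranks appearing in ${\bf E}$. From the description in Section~\ref{secENgraded}, $E_0 = R$ has rank $1$, and for $i \in [1,n-1]$, $E_i = \wedge^{i+1} F \otimes D_{i-1} G$ has rank $i\binom{n}{i+1}$, since the rank $n$ module $F$ contributes $\binom{n}{i+1}$ exterior factors and the divided power $D_{i-1}G$ of a rank $2$ free module has rank $i$. The second step is to unravel $({\bf E}^*)^{\bf 1}$: dualizing the length $(n-1)$ complex ${\bf E}$ reverses the order, so the module in homological degree $j$ of ${\bf E}^*$ (viewed as a chain complex) is $E_{n-1-j}^*$, and after the one-step shift the module in homological degree $i$ of $({\bf E}^*)^{\bf 1}$ is $E_{n-i}^*$, which has the same rank as $E_{n-i}$. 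In particular $({\bf E}^*)^{\bf 1}$ contributes nothing in degree $0$, contributes $R$ of rank $1$ in degree $n$, and contributes rank $(n-i)\binom{n}{n-i+1}$ in each intermediate degree $i \in [1,n-1]$.

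The third and final step is to add the two contributions and tidy up, yielding $\beta_0 = 1$, $\beta_n = 1$, and
$$\beta_i \;=\; i\binom{n}{i+1} + (n-i)\binom{n}{n-i+1} \;=\; i\binom{n}{i+1} + (n-i)\binom{n}{i-1}$$
for $i \in [1,n-1]$, by the binomial symmetry $\binom{n}{n-i+1} = \binom{n}{i-1}$. No genuine obstacle arises; the only point requiring care is the indexing convention used to reinterpret the dual complex as a chain complex together with the effect of the one-step shift. The minimality asserted by Theorem~\ref{minresgor} (ensured because $\alpha$ has strictly positive degree) is what lets us add ranks naively without worrying about cancellation between summands of ${\bf E}$ and $({\bf E}^*)^{\bf 1}$.
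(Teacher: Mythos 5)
Your proposal is correct and follows the same route as the paper: both read the Betti numbers off the minimal resolution ${\bf E}\oplus({\bf E}^*)^{\bf 1}$ of Theorem~\ref{minresgor}, using that $\operatorname{rank} E_i = i\binom{n}{i+1}$ and the symmetry $\binom{n}{n-i+1}=\binom{n}{i-1}$ for the dual part. Your explicit bookkeeping of the reindexing of $({\bf E}^*)^{\bf 1}$ is just a more detailed writing-out of the paper's one-line argument.
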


\section{Explicit construction of a minimal graded resolution}\label{secmain}

Let's go back to the general situation: $m_0=an+b$ with $a,b$
positive integers and $b\in [1,n]$.  In this section, we will not really use that
$\fp = I_2(A)+I_2(B)$ as in the Gorenstein case but essentially that
$\fp$ is
minimally generated by the 2 by 2 minors of $A$ and the principal
minors of $B$, {\it i.e.}, $\Delta_i:=\Delta_{1,i-b+2}(B)=X_n^aX_{i}-X_0^{a+d}X_{i-b}$ for
$i\in [b,n]$. In other words, if one sets
$$
\forall\,i\in [b,n],\ I_i:=I_2(A)+ (\Delta _b, \ldots , \Delta _{i})\,,
$$
then $\fp=I_n$. For all $i\in [b,n]$, a graded free resolution of $R/I_i$
will be obtained by a series of iterated mapping cones.
Set
$$
\delta _i := \deg \Delta _{i} = m_0(a+d+1)+(i-b)d,\ \forall i\in
[b,n]
$$
(of course the degrees are with respect to the weighted grading).
The following lemma is key to the construction of the minimal resolutions.

\begin{lemma}\label {colon}
\begin{enumerate}
\item\label{firstcolon}
$I_2(A):\Delta _ b= I_2(A)$.
\item\label{coloninclusion}
$\forall\, i\in [b,n-1]$,
 $(X_0, X_1, \ldots , X_{n-1})\subseteq I_i:
\Delta _{i+1}$.
\end{enumerate}
\end{lemma}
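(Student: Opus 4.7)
The two parts will be handled separately. The plan for (1) is to reduce to the primeness of $I_2(A)$, and the plan for (2) is to establish a single explicit congruence modulo $I_2(A)$.

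For (1), I would use that $I_2(A)$ is the prime defining ideal of the rational normal curve in $\PP_k^n$, so $I_2(A):\Delta_b=I_2(A)$ will follow as soon as $\Delta_b\notin I_2(A)$. Writing $\Delta_b=X_n^aX_b-X_0^{a+d+1}$, I test membership by applying the surjection $R\to k[s,t]$ defined by $X_i\mapsto s^{n-i}t^i$, whose kernel is precisely $I_2(A)$. The image of $\Delta_b$ is $s^{n-b}t^{na+b}-s^{n(a+d+1)}$; since $a\geq 1$ forces $na+b\geq n+1>0$, the two monomials are distinct, the image is nonzero, and hence $\Delta_b\notin I_2(A)$.

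For (2), fix $j\in[0,n-1]$ and $i\in[b,n-1]$. It suffices to prove $X_j\Delta_{i+1}\equiv X_{j+1}\Delta_i\pmod{I_2(A)}$, since $\Delta_i\in I_i$ and $I_2(A)\subseteq I_i$. The key observation is that both $X_jX_{i+1}-X_{j+1}X_i$ and $X_jX_{i+1-b}-X_{j+1}X_{i-b}$ are, up to sign, $2\times 2$ minors of $A$: the required indices all lie in $[0,n]$ thanks to $j\leq n-1$ and $i\geq b$. Expanding
\[X_j\Delta_{i+1}=X_n^a(X_jX_{i+1})-X_0^{a+d}(X_jX_{i+1-b})\]
and replacing each product by its counterpart modulo $I_2(A)$ yields $X_n^aX_{j+1}X_i-X_0^{a+d}X_{j+1}X_{i-b}=X_{j+1}\Delta_i$, as needed.

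There is no serious obstacle; each part is essentially a one-line verification once the right identity is set up. If one prefers to avoid invoking primeness in (1) directly, an alternative is to note that $R/I_2(A)\cong k[s^n,s^{n-1}t,\ldots,t^n]$ is a domain, so every nonzero element of it is a nonzerodivisor.
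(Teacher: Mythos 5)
Your proof is correct and takes essentially the same route as the paper: part (1) rests on the primeness of $I_2(A)$ together with $\Delta_b\notin I_2(A)$ (your explicit check via the parametrization $X_i\mapsto s^{n-i}t^i$ just makes precise what the paper leaves implicit), and part (2) is the paper's two-step reduction packaged as the single congruence $X_j\Delta_{i+1}\equiv X_{j+1}\Delta_i \pmod{I_2(A)}$ using the binomials $X_kX_{l+1}-X_{k+1}X_l\in I_2(A)$. The only cosmetic point is that when $j=i$ or $j=i-b$ these binomials are zero rather than genuine minors, but they still lie in $I_2(A)$, so the argument is unaffected.
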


\begin{proof}
(\ref{firstcolon}) holds because $I_2(A)$ is prime and
$\Delta_b\notin I_2(A)$. Moreover, for any $i\in [b,n-1]$ and $j\in
[0,n-1]$, one has that
$$
\begin{array}{rclcl}
X_j\Delta_{i+1}
&\equiv& X_n^aX_{i}X_{j+1}-X_0^{a+d}X_{i-b+1}X_j&\mod&
(X_jX_{i+1}-X_{j+1}X_i)\\
&\equiv& X_0^{a+d}X_{i-b}X_{j+1}-X_0^{a+d}X_{i-b+1}X_j&\mod&\Delta_i
\\
&=&X_0^{a+d}(X_{i-b}X_{j+1}-X_{i-b+1}X_j)&&
\end{array}
$$
which implies that $X_j\Delta_{i+1}\in I_2(A)+ (\Delta _{i})$
because $X_kX_{l+1}-X_{k+1}X_l\in I_2(A)$ for all $k,l\in [0,n-1]$, and we are done.
\end{proof}

\begin{remark}\label{rmkcolon}
Observe that Lemma~\ref{colon}~ (\ref{coloninclusion}) implies that, for all $i\in [b,n-1]$, either
$I_i:\Delta _{i+1}=(X_0, X_1, \ldots , X_{n-1})$ or $I_i:\Delta _{i+1}=(X_0, X_1, \ldots , X_{n-1}, X_n^\ell)$ 
for some $\ell\ge 1$. Indeed, we will see in (2) of Inductive Step~\ref{induction} that the latter never occurs.
\end{remark}

We are now ready for our iterated mapping cone construction.
Recall from Section~\ref{secENgraded} that the minimal graded free
resolution ${\bf E }=
\oplus _{i=0}^{n-1} E_i$ of $R/I_2(A)$ given by the Eagon-Northcott complex of the
matrix $A$ is
$$
0\rar E_{n-1}\lar\cdots\lar E_1\lar E_0\lar R/I_2(A)\rar 0
$$
with $E_0=R$ and
 $\displaystyle{E_{s-1}=
 \bigoplus_{k=1}^{s-1}
 \left(\bigoplus_{r\in\sigma(n,s) } R(- (sm_0 +kd
 +r d))\right)}$
for all $s\in [2,n]$.

\medskip

Let $\cc_b = {\bf E} \oplus {\bf E^1}(-\delta _b)$ denote the mapping cone of the map 
${\bf \Delta} _b: {\bf E} \rar {\bf E}$ which is induced by multiplication by 
$\Delta _b$ and $\delta_b = \deg(\Delta_b)$. By Lemma~\ref{colon}~(\ref{firstcolon}) 
together with the fact that all the individual maps in ${\bf \Delta} _b$ are multiplication 
by $\Delta _b$ and hence not zero (in fact injective), we get that $\cc_b$ is the 
minimal resolution of $R/I_ b$.

\begin{notation}\label{notationL}
Set
$
L(s,k):=\bigoplus_{r\in\sigma(n,s)}R(-[m_0(a+d+s+1)+kd+rd])
$
for all $s\in [1,n]$ and $k\ge 1$. Then, for all $s\in [2,n]$,
$(\cc_{b})_s=E_s\oplus\left(\bigoplus_{k=1}^{s-1}\left(L(s,k)\right)\right)$, and
the free modules in $\cc_b$ are
\begin{equation}\label{b}
\begin{array}{rcl}
(\cc_{b})_0&=&E_0=R,\\
(\cc_{b})_1&=&E_1\oplus E_{0}(-\delta_b)=E_1\oplus R(-(m_0(a+d+1))),\\
(\cc_{b})_s&=& E_s\oplus E_{s-1}(-\delta_b)\\
&=&\displaystyle{
E_s\oplus\left(\bigoplus_{k=1}^{s-1}\left(L(s,k)\right)\right),\ \forall s\in[2,n-1]
} \\
(\cc_{b})_n&=& \displaystyle{\bigoplus _{k=1}^ {n-1}
R(-[m_0(a+d+n+1)+kd+{n\choose 2}d])}\,.
\end{array}
\end{equation}
\end{notation}

\begin{remark}
If $b = n$, $\cc_b$ is the minimal resolution of
$R/\fp$. This is indeed the resolution obtained in
\cite[Theorem~3.4 and Corollary~3.5]{hip} in the case $b=n$.
\end{remark}

Let ${\bf K }= \oplus _{i=0}^n K_i$ be the Koszul  resolution of
$R/(X_0, \ldots , X_{n-1})$, i. e.,
$$
0\rar K_{n}\lar\cdots\lar K_1\lar K_0\lar R/(X_0,\ldots , X_{n-1})\rar
0
$$
with $K_0=R$,
$
K_1=\bigoplus_{k=0}^{n-1}R(-(m_0+kd))
$,
and $K_{s}=
\bigoplus_{r\in\sigma(n,s) }R(-(sm_0+r d))
$ for all $s\in [2,n]$.
Note that for all $s\in [2,n]$ and $i\in [b,n]$,
$$
K_s(-\delta_i)=K_s(-(m_0(a+d+1)+(i-b)d))=L(s,i-b)\,.
$$

For $i\in [b,n]$, we construct inductively two sequences of complexes ${\bf C}_{i}$ and ${\bf M}_i$
both resolving $R/I_i$ with ${\bf M}_i$ being a minimal resolution. For $i=b$,
${\bf C}_{b}={\bf M}_b={\bf E} \oplus {\bf E^1}(-\delta _b)$ is given in (\ref{b}).
We will now prove the following sequence of steps that forms the $i$-th step of our construction.

\begin{inductive}\label{induction}
\begin{enumerate}
\item
The minimal resolution of $R/I_{i-1}$ has length $n$.
\item
$I_{i-1}:\Delta _{i}=(X_0, X_1, \ldots , X_{n-1})$.
\item
Multiplication by $\Delta_i$ on $R$ induces a map of complexes
${\bf \Delta}_i:{\bf K}(-\delta_i)\rar {\bf M}_{i-1}$.
\item
${\bf C}_{i}=\cone{{\bf \Delta}_i}$ resolves $R/I_i$.
\item
${\bf M}_{i}$ is the minimized mapping cone of ${\bf \Delta}_i$ and is given by
\begin{itemize}
\item
$({\bf M}_{i})_0=R$,
\item
$({\bf M}_{i})_1=E_1\oplus
 \displaystyle{\bigoplus_{k=0}^{i-b}R(-[m_0(a+d+1)+kd])}$,
\item
$({\bf M}_{i})_s=
\left\{\begin{array}{ll}
E_s \oplus
 \left(\bigoplus_{k=s-1}^{i-b} L(s-1,k)\right)&
\hbox{ for }s\in [2,i-b+1],\\
E_{s} \oplus
\left(\bigoplus_{k=i-b+1}^{s-1} L(s,k)\right)&
\hbox{ for }s\in [i-b+2,n].
\end{array}\right.$
\end{itemize}
\item
The minimal resolution of $R/I_{i}$ has length $n$.
\end{enumerate}
\end{inductive}

\noindent
(1) $\Rightarrow$ (2).
As observed in Remark~\ref{rmkcolon}, by Lemma~\ref{colon}~ (\ref{coloninclusion}) one has that
$I_{i-1}:\Delta _{i}$ is either $(X_0, X_1, \ldots , X_{n-1})$ or $(X_0, X_1, \ldots , X_{n-1}, X_n^\ell)$ for some $\ell\ge 1$,
and it is well-known that the Koszul complex provides minimal graded free resolutions for $R/(X_0, \ldots , X_{n-1})$ and 
$R/(X_0, \ldots , X_{n-1}, X_n^\ell)$. Consider the Koszul  resolution ${\bf K'}= \oplus _{i=0}^{n+1}K_i'$  of
$R/(X_0, \ldots , X_{n-1}, X_n^\ell)$:
$$
0\rar K'_{n+1}\lar\cdots\lar K'_1\lar K'_0\lar R/(X_0, \ldots , X_{n-1}, X_n^\ell)\rar 0\,.
$$
Assume that $I_{i-1}:\Delta_{i}=(X_0, X_1, \ldots , X_{n-1}, X_n^\ell)$ for some $\ell\ge 1$ and consider the complex map ${\bf\Delta}'_{i}:{\bf K'}(-\delta_{i})\rar {\bf M}_{i-1}$ induced by multiplication by $\Delta_{i}$. Then the mapping cone of ${\bf\Delta}'_{i}$ provides a free resolution of $R/I_{i}$ and since ${\bf K'}(-\delta_{i})$ and ${\bf M}_{i-1}$ have length $n+1$ and $n$ respectively, $\cone{{\bf\Delta}'_{i}}$ has length $n+2$. It may not be minimal nevertheless, since $({\bf\Delta}'_{i})_{n+1}:K'_{n+1}(-\delta_{i})\rar 0$ is the zero map, no cancelation can occur at the last step of $\cone{{\bf\Delta}'_{i}}$ and $R/I_{i}$ would have a minimal resolution of length $n+2$ which is impossible since
$R$ is a regular ring of length $n+1$. Thus, $I_{i-1}:\Delta _{i}=(X_0, X_1, \ldots , X_{n-1})$.

\medskip\noindent
(2) $\Rightarrow$ (3) $\Rightarrow$ (4) and (5) $\Rightarrow$ (6) are straightforward. It remains to show that (4) $\Rightarrow$ (5).

\medskip\noindent
Set $i=b+t$. The complex map ${\bf \Delta} _{b+t}: {\bf K}(-\delta
_{b+t})\rar {\bf M}_{b+t-1}$
induced by multiplication by
$\Delta_{b+t}$ (which is of degree $\delta_{b+t}=m_0(a+d+1)+td$) is
given by the following diagram (the left column is the shifted
Koszul complex ${\bf K}(-\delta_{b+t})$ that resolves
$R(-[m_0(a+d+1)+td])/(X_0, \ldots , X_{n-1})$ minimally,
and the column on the right hand side is ${\bf M}_{b+t-1}$ that resolves $R/I_{b+t-1}$ minimally):

 \bigskip

$$
\begin{CD}
0 @. 0\\
@VV  V   @VV   V\\
L(n,t)
 @> ({\bf \Delta}_{b+t})_n>>
\bigoplus_{k=t}^ {n-1} L(n,k)
 \\
@VV V    @VV V\\
L(n-1,t)
 @>({\bf \Delta}_{b+t})_{n-1}>>
E_{n-1}\oplus
\left(\bigoplus_ {k={t}}^{n-2}
 L(n-1,k)\right)
 \\
@VV V    @VV V\\
\vdots @. \vdots \\
@VV V    @VV V\\
L(t+2,t)
 @> ({\bf \Delta} _{b+t})_{t+2}>>
E_{t+2} \oplus
\left(\bigoplus_{k=t}^{t+1} L(t+2,k)\right)
 \\
@VV V    @VV V\\
L(t+1,t)
 @> ({\bf \Delta} _{b+t})_{t+1}>>
E_{t+1} \oplus
 L(t+1,t)
 \\
@VV V @ VV V\\
L(t,t)
 @>({\bf\Delta} _{b+t})_{t}>>
E_{t}\oplus L(t-1,t-1)
\\
@VV V @ VV V\\
\vdots @. \vdots \\
@VV V @ VV V\\
L(3,t)
 @>({\bf\Delta}_{b+t})_3>>
E_3 \oplus
 \left(\bigoplus_{k=2}^{t-1} L(2,k)\right)
 \\
@VV V @ VV V\\
L(2,t)
 @>({\bf \Delta} _{b+t})_{2}>>
E_{2} \oplus
\left(\bigoplus_{k=1}^{t-1}
L(1,k) \right)
 \\
@VV V @ VV V\\
L(1,t)
 @>({\bf \Delta}_{b+t})_1>>
 E_1\oplus
 \bigoplus_{k=0}^{t-1}R(-[m_0(a+d+1)+kd])
 \\
@VV V @ VV V\\
R(-[m_0(a+d+1)+td])
 @>
({\bf \Delta}_{b+t})_0
 >>
R
\end{CD}
$$

\medskip
Now observe in the previous diagram that, for $s\in [t+1,n]$, the left side $({\bf
K}(-\delta_{b+t}))_s$ corresponds to the summand $k=t$ on the
right hand side and we will show that it splits off entirely.

\medskip
The following lemma shows that none of these maps are identically zero.

\begin {lemma}\label {positive}
Let ${\bf K}$, ${\bf M}$ and ${\bf \Delta} _{b+t}: {\bf K}(-\delta
_{b+t})\rar {\bf M}_{b+t-1}$ be as before. Then, $({\bf \Delta} _{b+t})_i\neq 0$ for all
 $0\le i\le n$. In fact, if we
choose a basis for the free modules $K_i$,  then we can pick a map
of complexes ${\bf \Delta} _{b+t}$ such that $({\bf \Delta} _{b+t})_i $ is not zero on any of the
chosen basis elements of $K_i$.
\end{lemma}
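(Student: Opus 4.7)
The plan is to prove the stronger assertion (b) that, for appropriate choices, every basis element of each $K_i$ is sent to a nonzero element; the first assertion then follows immediately. Fix the standard Koszul bases $\{e_J : J\subset\{0,\ldots,n-1\},\ |J|=i\}$ of the $K_i$, and construct the chain map ${\bf\Delta}_{b+t}$ level by level from the bottom, tracking one distinguished summand of $({\bf M}_{b+t-1})_i$ coming from the iterated mapping-cone construction (an $L$-type summand) into which each $e_J$ is sent by a nonzero monomial.

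At level zero, the map is multiplication by $\Delta_{b+t}=X_n^aX_{b+t}-X_0^{a+d}X_t$, nonzero on the single basis element. At level one, the key input is the congruence $X_j\Delta_{b+t}\equiv X_{j+1}\Delta_{b+t-1}\pmod{I_2(A)}$ for $j\in[0,n-1]$, already established inside the proof of Lemma \ref{colon}. Writing $\epsilon_{b+t-1}$ for the basis element of the summand of $({\bf M}_{b+t-1})_1$ corresponding to $\Delta_{b+t-1}$, I would set
$$({\bf\Delta}_{b+t})_1(e_j):=X_{j+1}\,\epsilon_{b+t-1}+\theta_j,$$
where $\theta_j\in E_1$ lifts the $I_2(A)$-defect through the Eagon-Northcott first differential; the coefficient of $\epsilon_{b+t-1}$ is the nonzero monomial $X_{j+1}$.

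For higher levels $s\geq 2$, I would propagate inductively. Assuming each $e_J$ of $K_{s-1}$ has been sent to an element with a specific nonzero monomial component on a distinguished summand of $({\bf M}_{b+t-1})_{s-1}$, the chain-map identity $f_s\circ({\bf\Delta}_{b+t})_s=({\bf\Delta}_{b+t})_{s-1}\circ f^K_s$ prescribes the image $f_s(({\bf\Delta}_{b+t})_s(e_I))$. Expanding this using the Koszul differential and reducing through the $I_2(A)$-relations coming from 2-by-2 minors of $A$, the prescribed image lies in the image under $f_s$ of the next distinguished summand of $({\bf M}_{b+t-1})_s$; a lift there yields $({\bf\Delta}_{b+t})_s(e_I)$ with a specific nonzero monomial component on that summand, with any remaining $I_2(A)$-correction absorbed into $E_s$.

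The main obstacle is the combinatorial bookkeeping: one must identify the correct distinguished summand at each level, which shifts regime as $s$ crosses the threshold $t$ in the description of $({\bf M}_{b+t-1})_s$ from Inductive Step~\ref{induction}(5), and track signs and degrees carefully through the Koszul and Eagon-Northcott differentials. As a consistency check at the extremal position, $({\bf\Delta}_{b+t})_n=0$ is excluded independently by Proposition \ref{sumcomplex} applied to the length-$n$ complexes ${\bf K}(-\delta_{b+t})$ and ${\bf M}_{b+t-1}$: since $X_0,\ldots,X_{n-1}$ is a regular sequence, the dual of ${\bf K}$ is exact in positions $0$ through $n-1$, so vanishing of $({\bf\Delta}_{b+t})_n$ would force $\Delta_{b+t}\in I_{b+t-1}$, contradicting the fact that $\Delta_{b+t}$ is a minimal generator of $\fp$ outside $I_{b+t-1}$.
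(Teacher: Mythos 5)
Your treatment of the top map is exactly the paper's: apply Proposition~\ref{sumcomplex} to ${\bf K}(-\delta_{b+t})\rar{\bf M}_{b+t-1}$ (the Koszul dual being exact where needed), so $({\bf \Delta}_{b+t})_n=0$ would force $\Delta_{b+t}\in I_{b+t-1}$, a contradiction. The gap is in the intermediate levels. For $s\ge 2$ your induction rests on the assertion that the element $({\bf \Delta}_{b+t})_{s-1}\bigl(f^{K}_s(e_I)\bigr)$ admits a lift through the differential of ${\bf M}_{b+t-1}$ having a nonzero monomial component on a ``distinguished'' $L$-type summand. That is precisely the point that needs proof: lifts are not unique, their components on a chosen summand are not determined by the element being lifted, and whether such a choice is possible depends on the (iterated mapping-cone) differential of ${\bf M}_{b+t-1}$, which you never make explicit. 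Even the weaker statement you actually need here --- that the image of each basis element is nonzero --- requires ruling out cancellation among the terms $\pm X_j\,({\bf \Delta}_{b+t})_{s-1}(e_{I\setminus j})$, which you do not address; you yourself flag the ``combinatorial bookkeeping'' and the regime change at $s=t$ as unresolved, so the inductive step is a plan rather than an argument. Note also that what you are trying to establish level by level (a nonzero projection onto a specific $L(s,t)$ summand) is essentially the content of the later and harder Lemma~\ref{projection}, which the paper proves by a separate descending induction that itself takes Lemma~\ref{positive} as input; building it into the proof of Lemma~\ref{positive} inverts the logical order and leaves the hard part unproven.

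The paper's route for $0\le i\le n-1$ is much softer and avoids all bookkeeping: since ${\bf M}_{b+t-1}$ is a \emph{minimal} resolution of length exactly $n$ and $({\bf \Delta}_{b+t})_0$ is injective, at each level $s\le n-1$ one may modify any lift of a basis element by a cycle, i.e.\ by a suitable element of the (nonzero) image of the next differential of ${\bf M}_{b+t-1}$, without disturbing the chain-map property, and thereby arrange that no chosen basis element of $K_s$ maps to zero; only the last map, where no such correction is available, requires the homotopy argument via Proposition~\ref{sumcomplex}. If you want to keep your explicit construction, the level-one formula built from the congruence $X_j\Delta_{b+t}\equiv X_{j+1}\Delta_{b+t-1}$ modulo $I_2(A)$ (from the proof of Lemma~\ref{colon}) is fine, but you must either carry out the higher-level lifts concretely, with the differentials of ${\bf M}_{b+t-1}$ in hand, or replace that part by the abstract minimality argument.
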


\begin{proof}
Since $\bf M$ is minimal of length $n$ and $({\bf \Delta} _{b+t})_0$ given by
multiplication by $\Delta_{b+t}$ is injective (not zero),  all the
maps $({\bf \Delta} _{b+t})_i, 0\le i\le {n-1}$ are not zero and can be so chosen to
be not zero on any chosen basis elements of $K_t$.  The only
question is for the last map $({\bf \Delta} _{b+t})_n$.  This we will take care by the
Proposition~\ref{sumcomplex}. Since
$\Delta _{b+t}$ is not contained in the ideal $I_{b+t-1}$, we get that
$({\bf \Delta} _{b+t})_0(R) = \Delta _{b+t}R$ is not contained in the image of ${\bf M}_1$.
Thus by Proposition~\ref{sumcomplex}, we see that $({\bf \Delta} _{b+t})_n$ is not
equal to zero. Since $K_n= R$, therefore, we get that $({\bf \Delta} _{b+t})_n$ is not
equal to zero on a basis for the free module $K_n$.
\end{proof}

\medskip

Next we show that the projection of $({\bf \Delta} _{b+t})_s$ onto $L(s,t)$ is not zero for any $s\geq t+1$.
By degree consideration, the projection of $({\bf \Delta} _{b+t})_s$ onto $L(s,k)$ for $k>t$ is certainly zero. What is left follows from the next lemma.

\begin {lemma}\label {projection}
For all $1\le t\le n-b$,
the projection of $({\bf \Delta}_{b+t})_k(R(-[m_0(a+d+k+1)+td+rd])$ onto
$R(-[m_0(a+d+k+1)+td+rd])$ is not zero, and hence is a multiplication by a unit, 
for every $k\in [t+1,n]$ and $r\in \sigma(n,k)$.
\end{lemma}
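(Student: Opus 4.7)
The plan is to prove the lemma by descending induction on $k$, running from $k=n$ down to $k=t+1$, by combining the commutativity of the chain map ${\bf\Delta}_{b+t}$ with the differentials and a careful identification of $(d_{\bf M})_{k+1}$ restricted to the $L(k+1,t)$ summand of ${\bf M}_{b+t-1}$.

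\medskip

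For the base case $k=n$, a clean degree argument suffices. Since $\sigma(n,n)$ has the single element $\binom{n}{2}$, both the source $K_n(-\delta_{b+t})=L(n,t)$ and each target summand $L(n,j)$ for $j\in[t,n-1]$ are single free modules with shifts $m_0(a+d+n+1)+(j+\binom{n}{2})d$ (note that $E_n=0$). This shift strictly increases in $j$, so the projection onto $L(n,j)$ vanishes for $j>t$, leaving only the $L(n,t)$-projection as possibly nonzero. By Lemma~\ref{positive}, $({\bf\Delta}_{b+t})_n$ is not zero, so its projection onto $L(n,t)$ must be a nonzero scalar, hence a unit in $k$.

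\medskip

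For the inductive step, suppose the claim is proved at level $k+1$. Given a multi-index $I\subset\{0,\ldots,n-1\}$ of size $k$, choose some $j\notin I$ and set $\tilde I=I\cup\{j\}$. Applying the commutativity identity
\[
({\bf\Delta}_{b+t})_k\circ(d_{\bf K(-\delta_{b+t})})_{k+1}=(d_{\bf M})_{k+1}\circ({\bf\Delta}_{b+t})_{k+1}
\]
to the basis vector $e_{\tilde I}$ and projecting onto the $L(k,t)$ summand of $({\bf M}_{b+t-1})_k$, the left hand side contributes $\pm X_j$ times the $L(k,t)$-projection of $({\bf\Delta}_{b+t})_k(e_I)$, together with analogous terms for each $r\in I$. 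On the right, the inductive hypothesis guarantees that $({\bf\Delta}_{b+t})_{k+1}(e_{\tilde I})$ contains a unit scalar multiple $\lambda\cdot e_{\tilde I}$ inside $L(k+1,t)$; pushing this through $(d_{\bf M})_{k+1}$ and projecting onto $L(k,t)$ produces $\lambda$ times a Koszul-shaped image of $e_{\tilde I}$ that carries a $\pm X_j\,e_I$ term. Matching the coefficient of $e_I$ on both sides forces the $L(k,t)$-projection of $({\bf\Delta}_{b+t})_k(e_I)$ to be $\pm\lambda\,e_I$, a unit multiple.

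\medskip

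The main obstacle lies in justifying the two structural facts underpinning the inductive step. First, one must ensure that the contributions to the $L(k,t)$-projection coming from the other components of $({\bf\Delta}_{b+t})_{k+1}(e_{\tilde I})$ --- namely the pieces in $L(k+1,j')$ for $j'>t$ and in $E_{k+1}$ --- do not interfere; the $E_{k+1}$ component causes no trouble because the Eagon-Northcott differential $E_{k+1}\to E_k$ lands in the $E_k$ factor of ${\bf M}_{b+t-1}$, disjoint from $L(k,t)$, while the excess $L(k+1,j')$ components can be killed by exploiting the homotopy freedom in the choice of lift. Second, and more substantively, one must show that the restriction of $(d_{\bf M})_{k+1}$ to $L(k+1,t)$ carries a component into $L(k,t)$ of the Koszul shape $e_{\tilde I}\mapsto\sum_{\ell\in\tilde I}(-1)^{\mathrm{pos}(\ell)}X_\ell\,e_{\tilde I\setminus\{\ell\}}$. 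I would prove this by tracing $L(k+1,t)$ back to its origin in ${\bf M}_b={\bf E}\oplus{\bf E}^1(-\delta_b)$, where the Koszul-like pattern comes directly out of the Eagon-Northcott diagonalization, and then verifying that the minimization cancellations at the intermediate steps $i=b+1,\ldots,b+t-1$ preserve this diagonal --- an argument that invokes the earlier cases of the present lemma and thus amounts to a simultaneous strong induction on $t$.
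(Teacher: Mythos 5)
Your skeleton is the paper's: descending induction on $k$, the base case from $E_n=0$ together with Lemma~\ref{positive}, and an inductive step that plays the commutativity square $({\bf \Delta}_{b+t})_k\circ d_{{\bf K}}=d_{{\bf M}}\circ({\bf \Delta}_{b+t})_{k+1}$ against the unit diagonal entry of $({\bf \Delta}_{b+t})_{k+1}$ on $L(k+1,t)$. The gap is in the coefficient-matching step. With an \emph{arbitrary} $j\notin I$, the coefficient of $e_I$ on the left-hand side is not just $\pm X_j$ times the diagonal entry you want: it also contains terms $\pm X_\ell c_\ell$ for $\ell\in I$, $\ell>j$, where $c_\ell$ is the positive-degree component of $({\bf \Delta}_{b+t})_k(e_{(I\cup\{j\})\setminus\{\ell\}})$ at $e_I$; and on the right-hand side, besides $\lambda e_{I\cup\{j\}}$, the element $({\bf \Delta}_{b+t})_{k+1}(e_{I\cup\{j\}})$ may have components at $e_J$ for $J=I\cup\{m\}$ with $m\notin I$, $m<j$ (these are allowed by degree, since $\sum J<\sum I+j$), and pushing those through $d_{{\bf M}}$ also produces multiples of $e_I$. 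So ``matching the coefficient of $e_I$'' does not isolate $\pm X_j\cdot(\hbox{diagonal})$ against $\pm\lambda X_j$, and your conclusion --- that the projection of $({\bf \Delta}_{b+t})_k(e_I)$ onto $L(k,t)$ equals $\pm\lambda e_I$ --- is both unjustified and stronger than what is true or needed. This is precisely what the paper's choice is for: it takes $j=r_0$ to be the \emph{smallest} nonnegative integer not in $\{r_1,\ldots,r_s\}$, so that no interfering index $m<r_0$ outside $I$ exists, and then argues by contradiction: if the image of $e_I$ were contained in $E_s$, the route ``down then across'' lands in $E_s+(X_{r_1},\ldots,X_{r_s})(\cdots)$, while ``across then down'' produces a unit multiple of $X_{r_0}$ at $e_I$, which cannot lie there. (The paper also first reduces the statement about individual summands to a statement about the whole of $L(s,t)$ by splitting off summands starting from the lowest $r$.)

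Concerning your two ``structural facts'': the components into $L(k+1,j')$ for $j'>t$ are not killed by ``homotopy freedom'' --- what actually saves you is degree bookkeeping (only degree-zero entries matter for the unit-diagonal conclusion, and the troublesome contributions are multiples of positive-degree forms), so that remark does not do the work you assign to it. The second fact --- that after the minimizations at steps $b+1,\ldots,b+t-1$ the component of the differential of ${\bf M}_{b+t-1}$ from $L(k+1,t)$ to $L(k,t)$ still has the Eagon--Northcott diagonalization shape $e_{\tilde I}\mapsto\sum_{\ell}\pm X_{\bullet}e_{\tilde I\setminus\{\ell\}}$ with unit coefficients on the variables --- is genuinely needed, and in your proposal it is only announced as something you ``would prove'' by tracing back to $\cc_b$ and a simultaneous induction on $t$. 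To be fair, the paper also uses this implicitly (it simply ``continues with the vertical arrow on the right''), but since your direct coefficient comparison leans on it more heavily than the paper's contradiction does, leaving it as a plan, together with the unaddressed cross-terms above, means the inductive step is not yet a proof.
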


\begin{proof}
It suffices to show that
\begin{equation}\label{p}
({\bf \Delta}_{b+t})_k(R(-[m_0(a+d+k+1)+td+rd])\not\subset E_k
\end{equation}
that is, the projection of $({\bf \Delta}_{b+t})_k(L(k,t))$ onto
$L(k,t)$ is not zero. If this projection is not zero for some $k$ and $t$, then the projection of $({\bf \Delta}_{b+t})_k(R(-[m_0(a+d+k+1)+td+rd])$ onto $R(-[m_0(a+d+k+1)+td+rd]$
is not zero for the lowest $r\in\sigma(n,k)$. Then we can split it off and go to the next smallest $r$ and so we get the lemma.

\smallskip
Now the rest of the proof is by descending induction on $k$.
None of these maps $\Delta$ are identically zero by Lemma~\ref{positive}.
If $k=n$, $E_n=0$ and (\ref{p}) holds, hence the lemma is true for all $t$.

\smallskip
Assume that the lemma holds for all $k\in [s+1,n]$ and for some $s$.  Let $k=s$.  Suppose that there 
is an $r= r_1+\cdots +r_s$ such that $(\Delta _{b+t})_{s}(R(-[m_0(a+d+s+1)+td+rd]) $ is
entirely in $E_s$.  Pick $r_0$ to be the smallest non negative integer not in $\{r_1, \ldots,
r_s\}$. Consider the commutative diagram:
$$
\begin{array}{ccc}
  R(-[m_0(a+d+s+2)+td+r+r_0d] &\lar&  ({\bf  M}_{b+t-1})_{s+1}\\
     \downarrow     &&                  \downarrow\\
L(s,t) &\lar            & ({\bf M}_{b+t-1})_s
\end{array}
$$

Since the lemma is true for $s+1$, we can take for every $r\in \sigma(n,s+1)$,
$$(\Delta _{b+t})_{s+1}(R(-[m_0(a+d+s+2)+td+rd+r_0d])  =
R(-[m_0(a+d+s+2)+td+rd+r_0d])+...$$
Continuing with the vertical arrow on the right, we see that
$R(-[m_0(a+d+s+2)+td+rd+r_0d])$
 maps onto
$$\Delta _{b+t-1}E_S+\sum _{i\ge
0}(\pm)X_{r_i}R(-[m_0(a+d+s+2)+td+(r+r_0-r_i)d]).$$
Thus
every one of the $r_i$ and in particular $X_{r_0}$
that makes up the sum $r+r_0$ will appear as part of the the image in $(M_{b+t-1})_s/E_s$.  Thus this
image is not
contained in $E_s\bigoplus (X_{r_1}, X_{r_2},\ldots , X_{r_s})\bigoplus
R(-[m_0(a+d+s+2)+(t-1)d+(r+r_0-r_i)d])$.
If we first come down
and then apply $(\Delta _{b+t-1})_s$,  the image is
$(\Delta_{b+t-1})_s(\sum _{i\ge 0} \pm X_{r_i} 1 (-[m_0(a+d+s+2)+td+(r+r_0-r_i)d])$ which 
is contained in $(E_s\bigoplus (X_{r_1}, X_{r_2},\ldots , X_{r_s})\bigoplus
R(-[m_0(a+d+s+2)+td+(r+r_0-r_i)d])$ - a contradiction. This completes the induction and the proof. 
\end{proof}

For $s\in [t+1,n]$, the left side $({\bf
K}(-\delta_{b+t}))_s$ splits off entirely with the summand $k=t$ on the
right hand side.
On the other
hand, for $s\in [0,t]$, no cancelation occurs. So the minimal free resolution of $R/I_i$ is as described in (5) of Inductive Step~\ref{induction}. This completes our inductive construction and we have proved the following:

\begin{theorem}\label{resIi}
Let $\mm=(m_0, \ldots, m_n)$ be an arithmetic sequence with common difference $d$ and write
$m_0=an+b$ for $a,b$ two positive integers with $b\in [1,n]$.
Consider the polynomial ring $R=k[X_0,\ldots , X_n]$ with $\deg{X_i}=m_i$. Let $J\subset R$ be the defining ideal
of the rational normal curve and ${\bf E}$ be its minimal resolution given by the Eagon-Northcott complex. Set $\Delta_i:=X_n^aX_{i}-X_0^{a+d}X_{i-b}$ for all
$i\in [b,n]$, $\delta_i:=\deg{\Delta_i}=m_0(a+d+1)+(i-b)d$, and consider the ideal
$I_i:=J+ (\Delta _b, \ldots, \Delta _{i})\subset R$. Then for all $i\in [b,n]$, $R/I_i$ is Cohen-Macaulay of codimension $n$ with
minimal graded free resolution ${\bf M}_i$ given by
$$
\begin{array}{rcl}
{\bf M}_b&=&\mincone{{\bf \Delta}_b:{\bf E}(-\delta_b)\rar {\bf E}}=\cone{{\bf \Delta}_b:{\bf E}(-\delta_b)\rar {\bf E}}
\\
{\bf M}_i&=&\mincone{{\bf \Delta}_i:{\bf K}(-\delta_i)\rar {\bf M}_{i-1}}\,,\ \forall i\in [b+1,n]
\end{array}
$$
where ${\bf K}$ is the Koszul resolution of $(X_0,\ldots,X_{n-1})$.
The free modules in ${\bf M}_i$ are explicitly given by
$$
0\rar  ({\bf M}_i)_n \lar ({\bf M}_i)_{n-1} \lar \cdots \lar ({\bf M}_i)_1\lar R\lar R/I_i\rar 0
$$
where
\begin{itemize}
\item
$({\bf M}_{i})_0=R$,
\item
$({\bf M}_{i})_1=E_1\oplus
 \displaystyle{\bigoplus_{k=0}^{i-b}R(-[m_0(a+d+1)+kd])}$,
\item
$({\bf M}_{i})_s=
\left\{\begin{array}{ll}
E_s \oplus
 \left(\bigoplus_{k=s-1}^{i-b} L(s-1,k)\right)&
\hbox{ for }s\in [2,i-b+1],\\
E_{s} \oplus
\left(\bigoplus_{k=i-b+1}^{s-1} L(s,k)\right)&
\hbox{ for }s\in [i-b+2,n].
\end{array}\right.$
\end{itemize}
In particular,
$({\bf M}_{i})_n=
\left\{\begin{array}{ll}
L(n-1,n-1)&
\hbox{ if $i=n$ and $b=1$},\\
\left(\bigoplus_{k=i-b+1}^{n-1} L(n,k)\right)&
\hbox{ otherwise.}
\end{array}\right.$
\end{theorem}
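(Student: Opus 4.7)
The plan is to proceed by induction on $i \in [b,n]$, proving simultaneously that ${\bf M}_i$ is a minimal graded free resolution of $R/I_i$ and that this resolution has length exactly $n$ (so $R/I_i$ is Cohen--Macaulay of codimension $n$). All the ingredients are already in place: Lemma~\ref{colon}, the mapping cone machinery of Section~\ref{subsecMappingCone}, and the two technical Lemmas~\ref{positive} and~\ref{projection}.

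For the base case $i=b$, Lemma~\ref{colon}(\ref{firstcolon}) gives $I_2(A):\Delta_b = I_2(A)$, so multiplication by $\Delta_b$ on ${\bf E}$ yields a mapping cone that resolves $R/I_b$. Every individual map in ${\bf \Delta}_b$ is multiplication by the homogeneous element $\Delta_b$ of strictly positive degree $\delta_b$, so no degree-zero component can appear; hence ${\bf M}_b = \cone{{\bf \Delta}_b}$ is already minimal of length $n$.

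For the inductive step, I would carry out the chain of implications (1) $\Rightarrow$ (2) $\Rightarrow$ (3) $\Rightarrow$ (4) $\Rightarrow$ (5) $\Rightarrow$ (6) listed in Inductive Step~\ref{induction}. The crucial step is (1) $\Rightarrow$ (2): by Remark~\ref{rmkcolon} and Lemma~\ref{colon}(\ref{coloninclusion}), $I_{i-1}:\Delta_i$ is either $(X_0,\ldots,X_{n-1})$ or $(X_0,\ldots,X_{n-1},X_n^\ell)$ for some $\ell \ge 1$. If the latter held, its Koszul resolution ${\bf K}'$ of length $n+1$ would produce, via the mapping cone with ${\bf M}_{i-1}$ of length $n$, a resolution of $R/I_i$ of length $n+2$; because the top map $({\bf \Delta}'_i)_{n+1}$ targets $0$, no cancellation is possible there, so the minimal resolution of $R/I_i$ would still have length $n+2$, contradicting the fact that $R$ is a regular ring of Krull dimension $n+1$. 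Thus $I_{i-1}:\Delta_i = (X_0,\ldots,X_{n-1})$, the Koszul complex ${\bf K}$ gives its minimal resolution, so multiplication by $\Delta_i$ lifts to a chain map ${\bf \Delta}_i:{\bf K}(-\delta_i) \lar {\bf M}_{i-1}$ and $\cone{{\bf \Delta}_i}$ resolves $R/I_i$.

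The delicate part is (4) $\Rightarrow$ (5), namely identifying which summands cancel in the minimization. Set $t = i-b$. Reading off the internal degrees from the big diagram displayed above, the only possible matches between a left-column summand of $\cone{{\bf \Delta}_i}$ and a right-column summand occur for $s \ge t+1$, between $K_s(-\delta_i) = L(s,t)$ on the left and the $k=t$ copy of $L(s,t)$ on the right. Lemma~\ref{positive} ensures that every map $({\bf \Delta}_i)_s$ is nonzero on the chosen basis of $K_s$, and Lemma~\ref{projection} upgrades this to the stronger fact that the projection onto the $L(s,t)$ summand is a unit scalar on each generator. Hence these pairs split off in full, yielding the explicit description of ${\bf M}_i$ in the theorem. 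For $s \le t$ no shift match is possible, so no further cancellation occurs, and ${\bf M}_i$ has length $n$, closing the induction and giving the Cohen--Macaulay conclusion.
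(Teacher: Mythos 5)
Your proposal follows the paper's own argument essentially verbatim: the same base case via Lemma~\ref{colon}(\ref{firstcolon}), the same chain of implications from Inductive Step~\ref{induction} (including the length/regularity argument ruling out $X_n^\ell$ in the colon ideal), and the same identification of the cancelling summands via Lemmas~\ref{positive} and~\ref{projection}. So it is correct and takes essentially the same route as the paper.
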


\begin{corollary}\label{CMtypeIi}
Using notations in Theorem~\ref{resIi}, the Cohen-Macaulay of type of $R/I_i$ is
$\left\{
\begin{array}{ll}
n &\hbox{ if } i=n \hbox{ and } b=1,\\
n-1+b-i &\hbox{ otherwise.}
\end{array}\right.$
\end{corollary}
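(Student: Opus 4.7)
The plan is to deduce this corollary immediately from the explicit description of $({\bf M}_i)_n$ given at the end of Theorem~\ref{resIi}. Recall that the Cohen-Macaulay type of a Cohen-Macaulay module equals the last nonzero Betti number in its minimal graded free resolution. Since Theorem~\ref{resIi} asserts that $R/I_i$ is Cohen-Macaulay of codimension $n$ and provides a minimal graded free resolution ${\bf M}_i$ of length $n$, the type of $R/I_i$ is simply the rank of the free $R$-module $({\bf M}_i)_n$.

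From Notation~\ref{notationL}, the module $L(s,k)$ is a direct sum indexed by $r\in\sigma(n,s)$, so its rank equals $|\sigma(n,s)|=\binom{n}{s}$. The case $i=n$ with $b=1$ is exceptional in Theorem~\ref{resIi}: there $({\bf M}_i)_n = L(n-1,n-1)$, which has rank $\binom{n}{n-1}=n$, giving the first case of the formula. In all other cases, Theorem~\ref{resIi} gives
\[
({\bf M}_i)_n \;=\; \bigoplus_{k=i-b+1}^{n-1} L(n,k),
\]
and each summand $L(n,k)$ has rank $\binom{n}{n}=1$.

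It remains only to count the number of summands in the direct sum above: the index $k$ runs over the integers from $i-b+1$ to $n-1$ inclusive, giving $(n-1)-(i-b+1)+1 = n-1+b-i$ summands. Hence the rank of $({\bf M}_i)_n$ is $n-1+b-i$, which matches the second case of the claim. No further work is required; the only subtlety worth noting is to check that the range $i-b+1\le n-1$ is indeed nonempty in the second case (equivalently, that $n-1+b-i\ge 1$), which follows since we are excluding the sole case $(i,b)=(n,1)$ and since $i\le n$, $b\ge 1$ force $n-1+b-i\ge 0$, with equality only in the excluded case.
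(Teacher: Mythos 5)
Your proof is correct and is exactly the argument the paper intends (the corollary is stated without proof as an immediate consequence of Theorem~\ref{resIi}): the Cohen--Macaulay type is the rank of $({\bf M}_i)_n$, which is $\binom{n}{n-1}=n$ for $L(n-1,n-1)$ in the exceptional case $i=n$, $b=1$, and otherwise the number $n-1+b-i$ of rank-one summands $L(n,k)$, $k=i-b+1,\ldots,n-1$. Nothing further is needed.
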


In particular, since $I_n=\fp$ is the defining ideal of the monomial curve $C_\mm$, we get the following theorem.

\begin{theorem}\label{mainThm}
Let $\mm=(m_0, \ldots, m_n)$ be an arithmetic sequence and write
$m_0=an+b$ for $a,b$ two positive integers with $b\in [1,n]$. If
$\fp\subset R:=\kx$ is the defining ideal of the monomial curve
$C_\mm\subset\A_k^{n+1}$ associated to $\mm$ then $R/\fp$ is
Cohen-Macaulay of codimension $n$ and its minimal graded free resolution ${\bf M}_n$
is
$$
0\rar  F_n \lar E_{n-1}\oplus F_{n-1} \lar \cdots \lar E_1\oplus
F_1\lar R\lar R/\fp\rar 0
$$
where, for all $s\in[2,n]$, $\displaystyle{E_{s-1}=
 \bigoplus_{k=1}^{s-1}
 \left(\bigoplus_{r\in\sigma(n,s) } R(- (sm_0+kd+rd))\right)}$, and
$$
\begin{array}{rcl}
F_1  &=& \displaystyle{ \left(\bigoplus _{k=0}^{n-b}R(-[m_0(a+d+1)+kd])\right)}\\
F_2  &=& \displaystyle{ \left(\bigoplus_{k=1}^{n-b}\left(
\bigoplus_{r=0}^{n-1}R(-[(m_0(a+d+2)+kd +rd])\right)\right)}
\\
F_s  &=&  \left\{
 \begin{array}{ll}
 \displaystyle{
\left(\bigoplus_{k=0}^{n-b+1-s}\left( \bigoplus_{r\in \sigma (n,s-1)}
R(-[m_0(a+d+s)+kd+rd])\right)\right)}& \hbox{\rm if}\ s\in[3,n-b+1],
 \\
 \displaystyle{
\left(\bigoplus_{k=1}^{s-n+b-1}\left( \bigoplus_{r\in \sigma (n,s)}
R(-[m_0(a+d+s+1)+kd+rd])\right)\right)}& \hbox{\rm if}\
s\in[n-b+2,n].
 \end{array}\right.
\end{array}
$$
\end{theorem}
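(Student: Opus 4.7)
The plan is to recognize Theorem~\ref{mainThm} as the specialization $i = n$ of Theorem~\ref{resIi}, once we verify that $I_n = \fp$. First, I would verify this identification. By Section~\ref{defidealsection}, $\fp = I_2(A) + I_2(B)$, and the principal $2\times 2$ minors of $B$ (those using the first column) are precisely $\Delta_b, \Delta_{b+1}, \ldots, \Delta_n$. For any other $2\times 2$ minor of $B$, obtained from columns $j_1 < j_2$ with both $\geq 2$, the minor has the shape $X_p X_{q+b} - X_q X_{p+b}$ for some $0 \leq p < q \leq n-b$, which lies in $I_2(A)$ because it can be written as a telescoping sum of consecutive $2\times 2$ minors of $A$. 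Thus $I_2(B) \subseteq I_2(A) + (\Delta_b, \ldots, \Delta_n)$, and hence $\fp = I_2(A) + (\Delta_b, \ldots, \Delta_n) = I_n$.

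Next, I would apply Theorem~\ref{resIi} with $i = n$. This immediately yields that $R/\fp$ is Cohen--Macaulay of codimension $n$ and provides a minimal graded free resolution ${\bf M}_n$ whose $s$-th free module is $E_s \oplus \left(\bigoplus_{k=s-1}^{n-b} L(s-1,k)\right)$ for $s \in [2, n-b+1]$, and $E_s \oplus \left(\bigoplus_{k=n-b+1}^{s-1} L(s,k)\right)$ for $s \in [n-b+2,n]$, where the transition occurs at $s = n-b+1$.

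The final step is to unpack the definition of $L(s,k)$ from Notation~\ref{notationL} and identify the two summands of $({\bf M}_n)_s$ with $E_s$ and $F_s$ as stated. For $s=1$ and $s=2$ this is immediate (using $\sigma(n,1) = \{0,1,\ldots,n-1\}$). For $s \in [3, n-b+1]$, the summand $\bigoplus_k L(s-1,k)$ expands into the first case of $F_s$; for $s \in [n-b+2,n]$, the summand $\bigoplus_k L(s,k)$ expands into the second case of $F_s$ (after reindexing the summation variable $k$ relative to the lower limit), and in both cases the collection of Koszul-type degree shifts $\sigma(n,s-1)$ respectively $\sigma(n,s)$ appears exactly as claimed.

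The main obstacle in this proof has already been overcome: all the substantive work is done in Theorem~\ref{resIi} via the iterated mapping cone construction, supported by the key lemmas (Lemma~\ref{colon}, Lemma~\ref{positive}, Lemma~\ref{projection}) that control the cancellations at each stage and guarantee minimality. What remains here is the identification $I_n = \fp$ together with a translation of notation, so no new structural argument is needed.
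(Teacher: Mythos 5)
Your route is the same as the paper's: Theorem~\ref{mainThm} is obtained there in one line by specializing Theorem~\ref{resIi} at $i=n$, the identification $\fp=I_n$ being recorded at the start of Section~\ref{secmain} (and your argument that the non-principal minors of $B$ lie in $I_2(A)$ is the same observation used in the proof of Theorem~\ref{minresgor}). So the overall structure of your proposal is fine and all the substance is indeed carried by Theorem~\ref{resIi}.

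The problem is your last step, which is not the harmless translation you claim. Putting $i=n$ in Theorem~\ref{resIi} gives $({\bf M}_n)_s=E_s\oplus\bigoplus_{k=s-1}^{n-b}L(s-1,k)$ for $s\in[2,n-b+1]$ and $E_s\oplus\bigoplus_{k=n-b+1}^{s-1}L(s,k)$ for $s\in[n-b+2,n]$, and the index $k$ occurs in the twist (the summand is $R(-[m_0(a+d+s)+kd+rd])$, resp.\ $R(-[m_0(a+d+s+1)+kd+rd])$). Hence \emph{reindexing $k$ relative to the lower limit changes the graded shifts}: the specialization yields $F_s$ with $k$ running over $[s-1,n-b]$ (resp.\ $[n-b+1,s-1]$), not over $[0,n-b+1-s]$ (resp.\ $[1,s-n+b-1]$) as in the displayed statement, the two versions differing by a shift of $(s-1)d$ (resp.\ $(n-b)d$) for $s\ge 3$. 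The ranks agree, so the Betti numbers and Theorem~\ref{ThmIndraConj} are untouched, but the graded modules do not coincide. The paper's own example $\mm_1=(11,13,15,17,19)$ (so $n=4$, $a=2$, $b=3$, $d=2$) decides which version is right: Theorem~\ref{resIi} gives last module $R(-115)\oplus R(-117)$, in agreement with the computed resolution, whereas the ranges printed in Theorem~\ref{mainThm} would give $R(-113)\oplus R(-115)$. So the statement as printed carries a misprint in the $k$-ranges, and your assertion that the shifts ``appear exactly as claimed'' glosses over precisely this point: to make your proof complete you must either carry out the specialization literally (and state $F_s$ with the ranges $[s-1,n-b]$, resp.\ $[n-b+1,s-1]$) or explicitly correct the ranges in the statement before matching them.
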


\section{Consequences}\label{secconsequences}

The first direct consequence of Theorem~\ref{mainThm} shows that
\cite[Conjecture~2.2]{hip}, which was first stated in \cite{eaca},
holds:

\begin{theorem}\label{ThmIndraConj}
With notations as in Theorem~\ref{mainThm}, the Betti numbers of
$R/\fp$ only depend on $n$ and on the value of $m_0$ modulo $n$.
\end{theorem}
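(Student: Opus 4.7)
The claim is an almost immediate bookkeeping exercise once Theorem~\ref{mainThm} is in hand, because that theorem gives an explicit minimal graded free resolution whose shape depends on $n$ and on $b$ (where $m_0 = an+b$ with $b \in [1,n]$), but not on $a$ or $d$. So the plan is simply to read off the ranks of the free modules at each homological degree.

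First I would recall that $|\sigma(n,s)| = \binom{n}{s}$ (as noted in the Notation preceding the weighted Eagon--Northcott complex). From the description of $E_{s-1}$ in Section~\ref{secENgraded}, the rank of $E_{s-1}$ equals $(s-1)\binom{n}{s}$, i.e.\ $\mathrm{rank}\,E_j = j\binom{n}{j+1}$ for $j \in [1,n-1]$ and $E_n = 0$. Next, using the description of $L(s,k)$ in Notation~\ref{notationL}, every summand $L(s,k)$ has rank $\binom{n}{s}$, so from the formulas for $F_1, F_2$ and $F_s$ in Theorem~\ref{mainThm} I would compute
\[
\mathrm{rank}\,F_s = \begin{cases} (n-b+2-s)\binom{n}{s-1} & \text{if } 1 \leq s \leq n-b+1,\\ (s-n+b-1)\binom{n}{s} & \text{if } n-b+1 < s \leq n,\end{cases}
\]
handling $s=1, 2$ separately as small checks (they coincide with the first branch of the formula since $\binom{n}{0}=1$ and $\binom{n}{1}=n$), and verifying the boundary value $s=n$ in both sub-cases $b=1$ and $b>1$ using the explicit description of $(\mathbf{M}_n)_n$.

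With these two computations, the total Betti number at homological position $j$ is
\[
\beta_j \;=\; \mathrm{rank}\,E_j + \mathrm{rank}\,F_j \;=\; j\binom{n}{j+1} + \begin{cases} (n-b+2-j)\binom{n}{j-1} & \text{if } 1 \leq j \leq n-b+1,\\ (j-n+b-1)\binom{n}{j} & \text{if } n-b+1 < j \leq n,\end{cases}
\]
and $\beta_0 = 1$. The right-hand side depends only on $n$ and on $b$, and $b$ is by definition the residue of $m_0$ modulo $n$ chosen in the range $[1,n]$. This proves the statement.

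There is no real obstacle: Theorem~\ref{mainThm} was the hard work, and the present result is just a consequence of the fact that both the rank of each Eagon--Northcott term and the rank of each Koszul-tail summand $L(s,k)$ are pure binomial coefficients depending only on $n$ and $s$, while the number of copies of these summands appearing in $F_s$ depends only on $n$ and $b$. The only mild care required is matching the two ranges ($s \leq n-b+1$ versus $s > n-b+1$) against the case distinction in the formula for $F_s$, and checking the edge values $s \in \{1,2,n\}$, which I would treat as separate verifications at the end.
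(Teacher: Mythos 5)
Your proposal is correct and follows exactly the same route as the paper: the paper's proof simply reads the ranks of the free modules $E_j$ and $F_j$ off the explicit minimal resolution of Theorem~\ref{mainThm}, obtaining $\beta_0=1$ and $\beta_j=j\binom{n}{j+1}+(n-b+2-j)\binom{n}{j-1}$ for $1\leq j\leq n-b+1$, $\beta_j=j\binom{n}{j+1}+(j-n+b-1)\binom{n}{j}$ for $n-b+1<j\leq n$, which depend only on $n$ and $b$. Your rank computations and the bookkeeping of the two ranges match the paper's argument precisely.
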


\begin{proof}
If one reads the Betti numbers $\{\beta_j,\ j\in [0,n]\}$ in the
minimal graded free resolution in Theorem~\ref{mainThm}, one gets
that $\beta_0=1$ and
\begin{equation}\label{bettiGeneral}
\beta_j=j{n\choose j+1}+
 \left\{\begin{array}{ll}
\displaystyle{(n-b+2-j){n\choose j-1}}&\hbox{if}\ 1\leq j\leq n-b+1,\\
\displaystyle{(j-n+b-1){n\choose j}}&\hbox{if}\ n-b+1<j\leq n,
 \end{array}\right.
\end{equation}
where $m_0\equiv b\mod n$ and $b\in [1,n]$ and hence the statement
holds.
\end{proof}

\begin{example}
For $n=4$, if $\fp\subset R=k[X_0,\ldots , X_4]$ is the defining ideal of the monomial curve associated
to an arithmetic sequence $\mm=(m_0,\ldots , m_4)$, the $4$ different patterns for the global Betti numbers of
$R/\fp$ are as follows. They correspond respectively to $b=1$, 2, 3 and 4:
$$
\begin{array}{lllllllllllllll}
0 & \rar &  R^4 & \lar & R^{15} & \lar & R^{20} & \lar & R^{10} & \lar & R & \lar & R/\fp & \rar & 0 \\
0 & \rar &  R      & \lar & R^9      & \lar & R^{16} & \lar & R^9      & \lar & R & \lar & R/\fp & \rar & 0 \\
0 & \rar &  R^2 & \lar & R^7      & \lar & R^{12} & \lar & R^8      & \lar & R & \lar & R/\fp & \rar & 0 \\
0 & \rar &  R^3 & \lar & R^{11} & \lar & R^{14} & \lar & R^7      & \lar & R & \lar & R/\fp & \rar & 0
\end{array}
$$
For example, the two arithmetic sequences $\mm_1=(11,13,15,17,19)$ and $\mm_2=(7,12,17,22,27)$ fit into the third pattern because $11\equiv 7\equiv 3\mod 4$. Denoting by $\fp_1$ and $\fp_2$ the defining ideal of $C_{\mm_1}$ and $C_{\mm_2}$ respectively, the minimal graded free resolutions of $R/\fp_1$ and $R/\fp_2$ are given by Theorem~\ref{mainThm}, and the result can easily be checked
using the softwares CoCoA, Macaulay2 or Singular:
{\footnotesize
\begin{eqnarray*}
0\rar
R(-115)\oplus R(-117)
\lar
\begin{array}{r}
R(-58)\oplus R(-60)\\
\oplus R(-62)\oplus R(-98)\\
\oplus R(-100)\oplus R(-102)\\
\oplus R(-104)
\end{array}
\lar
\begin{array}{r}
R(-41)\oplus R(-43)^2\\
\oplus R(-45)^2\oplus R(-47)^2\\
\oplus R(-49)\oplus R(-68)\\
\oplus R(-70)\oplus R(-72)\\
\oplus R(-74)
\end{array}\\
\lar
\begin{array}{r}
R(-26)\oplus R(-28)\\
\oplus R(-30)^2\oplus R(-32)\\
\oplus R(-34)\oplus R(-55)\\
\oplus R(-57)
\end{array}
\lar
R\lar R/\fp_1\rar 0.
\end{eqnarray*}
\begin{eqnarray*}
0\rar
R(-117)\oplus R(-122)
\lar
\begin{array}{r}
R(-63)\oplus R(-68)\\
\oplus R(-73)\oplus R(-95)\\
\oplus R(-100)\oplus R(-105)\\
\oplus R(-110)
\end{array}
\lar
\begin{array}{r}
R(-41)\oplus R(-46)^2\\
\oplus R(-51)^2\oplus R(-56)^2\\
\oplus R(-61)^2\oplus R(-66)\\
\oplus R(-71)\oplus R(-76)
\end{array}\\
\lar
\begin{array}{r}
R(-24)\oplus R(-29)\\
\oplus R(-34)^2\oplus R(-39)\\
\oplus R(-44)\oplus R(-49)\\
\oplus R(-54)
\end{array}
\lar
R\lar R/\fp_2\rar 0.
\end{eqnarray*}
}
\end{example}

\begin{remark}
It is important to note  that the phenomenon described in
Theorem~\ref{ThmIndraConj} is something special about arithmetic
sequences only. In general, if $0 < m_{0} < \cdots < m_{n}$ is a sequence of
integers then the Betti numbers of the monomial curve defined by
$x_{i} = t^{m_{i}}$ do not depend only on $n$ and the remainder of
$m_{0}$ upon division by $n$.  It does not hold even for almost
arithmetic sequences, even in dimension 3. 
In dimension 3, a monomial curve is either a complete
intersection with $\beta_1=2, \beta_2=1$ or an ideal of $ 2\times 2$ minors
of a $3\times 2$ matrix with $\beta_1=3$ and $\beta_2=2$.   Now, it is easy
to see that for $\mm = (7, 10, 15)$,  $C_\mm$ is a complete intersection with
$\fp = (X_1^3-X_2^2, X_0^5-X_1^2X_2)$ where as for $\mm =(13,16, 21)$, $C_\mm$ is
not a complete intersection.  However both $7$ and $13 $ are odd and
hence equal 1 modulo 2.
 \end {remark}

\begin{corollary}
If $I_i\subset R$ is as in Theorem~\ref{resIi}, $R/I_i$ is Gorenstein if and only if
\begin{itemize}
\item
$b=2$, $i=n$,
\item
$b=1$, $i=n-1$, or
\item
$n=1$.
\end{itemize}
In particular, we recover the result of \cite[Corollary~6.2]{patseng} that $R/\fp$ is Gorenstein if and only if $b=2$.
Moreover, $R/I_i$ are never level unless they are Gorenstein.
\end{corollary}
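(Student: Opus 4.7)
The plan is to deduce both claims directly from the explicit shape of the minimal resolution ${\bf M}_i$ in Theorem~\ref{resIi} together with the Cohen--Macaulay type formula in Corollary~\ref{CMtypeIi}. No new tools are required; everything reduces to bookkeeping of the shifts and a short case analysis.

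For the Gorenstein part, I will use the characterization that $R/I_i$ is Gorenstein iff its Cohen--Macaulay type equals $1$. Corollary~\ref{CMtypeIi} gives type $n$ in the exceptional case $i=n$, $b=1$, and $n-1+b-i$ otherwise. Setting type $=1$, the first branch forces $n=1$. In the second branch the equation rewrites as $i=n+b-2$; together with $b\in[1,n]$ and $i\in[b,n]$, this forces $n\ge 2$ and $b\le 2$, yielding the two pairs $(b,i)=(2,n)$ and $(b,i)=(1,n-1)$. These are precisely the three cases in the statement. Specializing to $i=n$, for $n\ge 2$ one recovers $R/\fp$ Gorenstein iff $b=2$, which is \cite[Corollary~6.2]{patseng}.

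For the level assertion, I will inspect the rightmost free module $({\bf M}_i)_n$ from Theorem~\ref{resIi}. In the exceptional case $i=n$, $b=1$, one has $({\bf M}_i)_n=L(n-1,n-1)$, indexed by $r\in\sigma(n,n-1)$; since the $n$ elements of $\sigma(n,n-1)$ are the pairwise distinct values $\binom{n}{2}-j$ for $j\in[0,n-1]$, this module carries $n$ distinct shifts whenever $n\ge 2$, so $R/I_i$ is not level unless $n=1$. In the complementary case $({\bf M}_i)_n=\bigoplus_{k=i-b+1}^{n-1}L(n,k)$, and since $\sigma(n,n)=\{\binom{n}{2}\}$ is a singleton, each $L(n,k)$ has rank one with a single shift strictly determined by $k$; distinct values of $k$ give distinct shifts, so $({\bf M}_i)_n$ lives in a single degree iff the range $\{i-b+1,\ldots,n-1\}$ is a singleton, iff $i=n+b-2$, which is precisely the Gorenstein condition already identified.

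I do not expect any genuine obstacle. The argument is routine index chasing; the only subtlety is keeping the degenerate case $n=1$ and the exceptional branch $(i,b)=(n,1)$ of Theorem~\ref{resIi} separate, and observing that $\sigma(n,n)$ is a singleton while $\sigma(n,n-1)$ is not — this is exactly what makes the two branches of the theorem behave oppositely with respect to levelness and makes the equivalence ``level $\Leftrightarrow$ Gorenstein'' drop out automatically.
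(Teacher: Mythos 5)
Your argument is correct and follows essentially the same route as the paper: Gorenstein is read off by setting the Cohen--Macaulay type from Corollary~\ref{CMtypeIi} equal to $1$ (with the same case analysis on the two branches and on $n=1$), and non-levelness comes from the shifts in the last free module of the resolution in Theorem~\ref{resIi}. Your write-up merely spells out in detail the degree bookkeeping that the paper compresses into the remark that non-levelness ``follows directly from the degrees in the resolution.''
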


\begin{proof}
If $n=1$, $I_1=\fp$ is principal and therefore Gorenstein. By Corollary~\ref{CMtypeIi}, the type of $R/I_i$ is $n-1+b-i$. Since $i\neq n$,
$n-1+b-i\geq b-1>1$ if $b>2$ so $R/I_i$ is Gorenstein if and only if either $b=2$, $i=n$ or $b=1$, $i=n-1$.
The non-levelness of $R/I_i$ when it is not Gorenstein follows directly from the degrees in the resolution.
\end{proof}

\begin{remark}\label{rmkGor}
If $m_0\equiv 2\mod n$, i.e., if $R/\fp$ is Gorenstein, then by
(\ref{bettiGeneral}) one has that $\beta_0=\beta_n=1$ and
$\beta_j=j{n\choose j+1}+(n-j){n\choose j-1}$ for all $j\in [1,n-1]$
which is Corollary~\ref{bettiGor}. Note that this result is obtained
here by making a resolution minimal which is obtained through iterated 
mapping cone construction, while it had been obtained in 
Section~\ref{secgorenstein} by a direct argument.
\end{remark}

\begin{remark}
If $m_0\equiv 1\mod n$, one gets by (\ref{bettiGeneral}) that for
all $j\in [1,n]$, $\beta_j=j{n\choose j+1}+(n+1-j){n\choose j-1}$.
Note that in this case the Betti numbers had already been obtained
in \cite[Theorem~3.1]{hip} where we show that for all $j\in [1,n]$,
$\beta_j=j{n+1\choose j+1}$. One can easily check that both numbers
coincide.
\end{remark}

\begin{theorem}\label{CMtype}
With notations as in Theorem~\ref{mainThm}, the Cohen
Macaulay type of $R/\fp$ is the unique integer $c$ in $[1,n]$
such that $c\equiv m_0-1 \mod n$.
\end{theorem}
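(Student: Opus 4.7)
The plan is to compute $\beta_n$ directly and observe that it agrees with the stated residue. Since $R/\fp$ is Cohen--Macaulay of codimension $n$ by Theorem~\ref{mainThm}, its Cohen--Macaulay type equals the rank of the last free module in its minimal graded free resolution, namely $\beta_n$. So the entire statement reduces to showing that $\beta_n \equiv m_0-1 \pmod n$ with $\beta_n\in[1,n]$, and this we read off from the explicit formula (\ref{bettiGeneral}) in Theorem~\ref{ThmIndraConj}.

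Write $m_0=an+b$ with $b\in[1,n]$ as in the paper, so that $m_0-1\equiv b-1\pmod n$. I would split into the two cases determined by the formula. If $b=1$, then $n-b+1=n$, so $j=n$ falls in the first branch of (\ref{bettiGeneral}); using $\binom{n}{n+1}=0$, one gets
\[
\beta_n \;=\; n\binom{n}{n+1}+(n-b+2-n)\binom{n}{n-1} \;=\; (2-1)\,n \;=\; n,
\]
and the unique $c\in[1,n]$ with $c\equiv m_0-1\equiv 0\pmod n$ is indeed $c=n$. If $b\geq 2$, then $n-b+1\leq n-1$, so $j=n$ falls in the second branch and
\[
\beta_n \;=\; n\binom{n}{n+1}+(n-n+b-1)\binom{n}{n} \;=\; b-1,
\]
while the unique $c\in[1,n]$ with $c\equiv b-1\pmod n$ is $c=b-1$ (since $b-1\in[1,n-1]$). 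In both cases $\beta_n=c$, which completes the proof.

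There is no serious obstacle here: the theorem is a direct numerical corollary of Theorem~\ref{ThmIndraConj}, or equivalently one can just inspect the description of $({\bf M}_n)_n$ at the end of Theorem~\ref{resIi}, which is $L(n-1,n-1)$ of rank $n$ when $b=1$, and $\bigoplus_{k=n-b+1}^{n-1}L(n,k)$ of rank $b-1$ otherwise. The only mild care required is to check that the boundary cases ($b=1$ vs $b\geq 2$) are handled with the correct branch of the piecewise formula, and that the identification $b-1 \in [1,n-1]$ for $b\geq 2$ and $n\equiv 0\pmod n$ for $b=1$ gives the unique representative in $[1,n]$.
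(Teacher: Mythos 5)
Your proposal is correct and follows essentially the same route as the paper: the paper's proof also reads $\beta_n$ off from the two branches of (\ref{bettiGeneral}), obtaining $\beta_n=\binom{n}{n-1}=n$ when $b=1$ and $\beta_n=(b-1)\binom{n}{n}=b-1$ when $b\geq 2$, and identifies this with the residue of $m_0-1$ modulo $n$. Your case analysis and boundary checks match the paper's argument, with the inspection of $({\bf M}_n)_n$ being an equivalent alternative reading of the same data.
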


\begin{proof}
The Cohen-Macaulay type of $R/\fp$, $\beta_n$, is computed by the
first formula in (\ref{bettiGeneral}) when $b=1$, and by the second
otherwise. Thus, $\beta_n={n\choose n-1}=n$ if $b=1$, and
$\beta_n=(b-1){n\choose n}=b-1$ otherwise.
\end{proof}

Putting together Theorem~\ref{ThmIndraConj} and
Theorem~\ref{CMtype}, one gets the following:

\begin{corollary}\label{CMtypeAndBetti}
With notations as in Theorem~\ref{mainThm}, the Cohen-Macaulay type
of $R/\fp$ determines all its Betti numbers.
\end{corollary}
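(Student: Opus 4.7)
The plan is to show that the correspondence $b\mapsto c$ sending the residue class $b\in[1,n]$ of $m_0$ modulo $n$ to the Cohen-Macaulay type $c$ of $R/\fp$ is a bijection on $[1,n]$; once this is established, Theorem~\ref{ThmIndraConj} (which says the Betti numbers depend only on $n$ and on $b$) delivers the corollary immediately.

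First I would combine Theorem~\ref{CMtype} with the congruence $m_0\equiv b\pmod n$ to rewrite the defining relation for $c$ as $c\equiv b-1\pmod n$ with $c\in[1,n]$. From the proof of Theorem~\ref{CMtype} the map is explicit: $b=1$ is sent to $c=n$ (the case $\beta_n=n$), while each $b\in[2,n]$ is sent to $c=b-1\in[1,n-1]$. Thus the map $b\mapsto c$ is a bijection $[1,n]\to[1,n]$ with inverse $c=n\mapsto b=1$ and $c\in[1,n-1]\mapsto b=c+1$.

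Next I would invoke Theorem~\ref{ThmIndraConj}: for a monomial curve $C_\mm$ in $\A_k^{n+1}$ defined by an arithmetic sequence, the Betti numbers of $R/\fp$ are a function of $n$ and $b$ alone (the explicit formula being (\ref{bettiGeneral})). Since $n$ is fixed as the codimension of $R/\fp$ (equivalently, as the length of its minimal graded free resolution), and since by the previous step $b$ is uniquely recovered from $c$, it follows that $c$ determines all the $\beta_j$. Concretely, one substitutes $b=c+1$ (if $c<n$) or $b=1$ (if $c=n$) into (\ref{bettiGeneral}) to read off each $\beta_j$.

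There is really no obstacle here beyond assembling the two preceding theorems; the only point worth flagging is the case distinction at $b=1$, where the formula $\beta_n=b-1$ would give $0$, which is why Theorem~\ref{CMtype} handles that case separately and assigns $c=n$. Once that boundary case is correctly accounted for, the bijection $b\leftrightarrow c$ is transparent and the corollary follows.
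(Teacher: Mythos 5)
Your proposal is correct and is exactly the paper's argument: the paper derives this corollary by "putting together" Theorem~\ref{ThmIndraConj} and Theorem~\ref{CMtype}, i.e., the type $c$ determines $b$ via the bijection $c\equiv b-1\pmod n$ on $[1,n]$ (with $b=1\mapsto c=n$), and then the Betti numbers are read off from $n$ and $b$. Your explicit handling of the boundary case $b=1$ is a faithful elaboration of the same reasoning, not a different route.
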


Note that in the previous corollary, the same result holds if one substitutes the minimal number of generators of $\fp$ for the Cohen-Macaulay type of $R/\fp$.

\begin {remark}
One can also deduce from the minimal graded free resolution in Theorem~\ref{mainThm}, the value of the (weighted) Castelnuovo-Mumford regularity of $R/\fp$ which is indeed
$$
\hbox{reg}(R/\fp)=
\left\{\begin{array}{ll}
{n\choose 2}d+m_0(a+d)+n(m_0-1)&\hbox{if }b=1,
\\
({n\choose 2}+b-1)d+m_0(a+d+1)+n(m_0-1)&\hbox{if }b\geq 2.
\end{array}\right.
$$
On the other hand, the Frobenius number $g(\mm)$ of the numerical semigroup $\Gamma(\mm)$ can be computed  using \cite[Theorem~3.2.2]{ramirezbook} and one gets that $g(\mm)=(a-1)m_0+d(m_0-1)$ if $b=1$, and
$g(\mm)=am_0+d(m_0-1)$ if $b\geq 2$. Thus
$
\hbox{reg}(R/\fp)-g(\mm)=({n\choose 2}+b)d+m_0+n(m_0-1)\,.
$
In particular, the regularity is always an upper bound for the conductor $g(\mm)+1$ of the numerical semigroup $\Gamma(\mm)$ and it is, in general, much bigger. The above relation between the regularity of the semigroup ring and the Frobenius number of the semigroup 
can be nicely expressed as follows:
$$
\hbox{reg}(R/\fp)=g(\mm)
+\sum_{i=0}^{n}m_i-(n-b)d-n\,.
$$
\end{remark}

\medskip

Finally, we use Theorem \ref{ThmIndraConj} to prove a conjecture of Herzog and
Srinivasan on eventual periodicity of Betti numbers of semigroup
rings in our context.
Given a sequence of positive integers $\mm= (m_0, \ldots, m_n)$ and a positive integer $j$,
denote by $\mm+(j) = \mm +(j,j, \ldots, j)$. Herzog and
Srinivasan have conjectured the following:
\begin{Conjecture}[Herzog and Srinivasan] Let $\mm$ and $\mm+(j)$ be as above.
\begin{itemize}
\item[HS1] The Betti numbers of the semigroup ring $k[\Gamma(\mm +(j))]$
are eventually periodic in $j$.
\item[HS2] The number of minimal generators of the defining ideal of the monomial curve $C_{\mm+(j)}$
is eventually periodic in $j$ with period $m_n-m_0$.
\item[HS3] The number of minimal generators of the defining ideal of the monomial curve $C_{\mm+(j)}$
is bounded for all $j$.
\end{itemize}
\end{Conjecture}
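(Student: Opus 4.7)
The plan is to reduce the statement to Theorem~\ref{ThmIndraConj} by rescaling $\mm+(j)$ to kill any common factor among its entries, and then to verify that the quantity determining the Betti numbers is invariant under $j \mapsto j+nd$.

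First, I would observe that for every $j \geq 0$ the shifted sequence $\mm+(j)$ is again an arithmetic progression of length $n+1$ with common difference $d$ and first term $m_0+j$. Let $e_j := \gcd(m_0+j,\ldots,m_n+j)$. Since successive differences are $d$, one has $e_j = \gcd(m_0+j,d)$, so $e_j \mid d$ and the function $j \mapsto e_j$ is periodic in $j$ with period $d$ (hence with period $nd$). Setting $\mm'_j := \tfrac{1}{e_j}(\mm+(j))$, we obtain an arithmetic sequence of $n+1$ coprime positive integers with common difference $d'_j := d/e_j$ and first term $m'_{0,j} := (m_0+j)/e_j$. The substitution $t \mapsto t^{e_j}$ induces an isomorphism of $k$-algebras $k[\Gamma(\mm+(j))] \cong k[\Gamma(\mm'_j)]$ after rescaling of degrees, so the total Betti numbers of the two semigroup rings coincide.

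Next, I would verify that $\mm'_j$ meets the hypotheses of Theorem~\ref{ThmIndraConj} once $j$ is large. Coprimality is built into the rescaling. For minimal generation, any relation $m'_{0,j}+id'_j = \sum_{k\neq i} a_k(m'_{0,j}+kd'_j)$ with $a_k \in \N$ and $\sum_k a_k \geq 1$ is impossible when $\sum_k a_k = 1$ (the entries are distinct), so $\sum_k a_k \geq 2$, forcing $m'_{0,j}+id'_j \geq 2m'_{0,j}$ and hence $nd'_j \geq m'_{0,j}$. Since $m'_{0,j} \geq (m_0+j)/d$ grows linearly in $j$ while $nd'_j \leq nd$ is bounded, the inequality $nd'_j < m'_{0,j}$ eventually holds, so $\mm'_j$ minimally generates $\Gamma(\mm'_j)$ for all $j$ large enough.

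Theorem~\ref{ThmIndraConj} then says the Betti numbers of $k[\Gamma(\mm'_j)]$ depend only on $n$ and on $m'_{0,j} \bmod n$. To conclude eventual periodicity with period $nd$, I would check that both $e_j$ and $m'_{0,j} \bmod n$ are invariant under $j \mapsto j+nd$. The first was noted above. For the second, since $e_{j+nd}=e_j$,
$$
m'_{0,j+nd} \;=\; \frac{m_0+j+nd}{e_j} \;=\; m'_{0,j} + n\cdot\frac{d}{e_j} \;=\; m'_{0,j} + nd'_j \;\equiv\; m'_{0,j} \pmod{n},
$$
as $d'_j \in \N$. Combining, the total Betti numbers of $k[\Gamma(\mm+(j))]$ and $k[\Gamma(\mm+(j+nd))]$ agree for all sufficiently large $j$. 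The hard part is really the bookkeeping: Theorem~\ref{ThmIndraConj} demands coprime inputs that minimally generate, whereas $\mm+(j)$ itself may satisfy neither; the rescaling $\mm'_j$ restores both hypotheses without changing the total Betti numbers, and the periodicities of $e_j$ with period $d$ and of $m'_{0,j} \bmod n$ with period $nd$ then propagate through to the Betti numbers.
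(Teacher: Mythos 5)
Your proposal is correct and follows essentially the same route as the paper's proof of Theorem~\ref{ThmPeriodic}: divide $\mm+(j)$ by $e_j=\gcd(m_0+j,d)$ (which is periodic of period $d$), show the rescaled sequence is an arithmetic sequence in the paper's sense (coprime and minimally generating) once $j$ is large, observe that its first term modulo $n$ is unchanged under $j\mapsto j+nd$, and invoke Theorem~\ref{ThmIndraConj}. The only cosmetic difference is that you justify the equality of Betti numbers via the substitution $t\mapsto t^{e_j}$, where the paper simply notes $C_{\mm+(j)}=C_{\widetilde{\mm+(j)}}$.
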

 They prove the conjecture for $n=2$
and A. Marzullo proves it for some cases when $n=4$ in \cite {Ad}.
Our Theorem~\ref{ThmPeriodic} proves this periodicity
conjecture in its strong form (HS1) for arithmetic sequences.

\begin{theorem}\label{ThmPeriodic}
If $\mm=(m_0,\ldots,m_n)$ is an arithmetic sequence and $\mm+(j)=\mm +(j,j, \ldots, j)$, then the Betti numbers of $C_{\mm+(j)}$ are eventually periodic in $j$ with period $m_n-m_0$.
\end{theorem}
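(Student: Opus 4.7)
The plan is to reduce to Theorem~\ref{ThmIndraConj}, which already tells us that for any honest arithmetic sequence in the sense of the paper, the total Betti numbers are determined by $n$ and the residue class of the first entry modulo~$n$. The complication is that $\mm+(j)$ need not have gcd $1$ nor satisfy the minimality hypothesis, so we must first pass to an associated reduced arithmetic sequence and keep track of how its first entry modulo $n$ varies with $j$.

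First I would observe that because $\mm$ is arithmetic with common difference $d$, so is $\mm+(j)$, and therefore $\gcd(\mm+(j))=\gcd(m_0+j,d)=:g(j)$. In particular $g(j)$ depends only on $j\bmod d$, which divides $nd=m_n-m_0$. I then set $\mm'(j):=(\mm+(j))/g(j)$, an arithmetic sequence of $n+1$ integers with gcd $1$ and common difference $d/g(j)$. The observation in the Introduction that $C_\mm$ and $C_{t\mm}$ share the same defining ideal (applied with $t=g(j)$) gives $\fp(\mm+(j))=\fp(\mm'(j))$, so in particular they have the same total Betti numbers. For $j$ sufficiently large, $(m_0+j)/g(j)$ exceeds $n$ and $\mm'(j)$ minimally generates its numerical semigroup, hence qualifies as an arithmetic sequence in the sense of the paper, and Theorem~\ref{ThmIndraConj} applies: the Betti numbers of $R/\fp(\mm'(j))$ depend only on $n$ and on $b'(j):=(m_0+j)/g(j) \bmod n$.

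It remains to show $b'(j)$ is periodic in $j$ with period $nd$. Since $g(j+nd)=\gcd(m_0+j+nd,d)=\gcd(m_0+j,d)=g(j)$, one computes
$$
\frac{m_0+j+nd}{g(j+nd)}-\frac{m_0+j}{g(j)}=\frac{nd}{g(j)}=n\cdot\frac{d}{g(j)},
$$
which is a multiple of $n$. Hence $b'(j+nd)=b'(j)$, and the sequence of Betti numbers is periodic in $j$ with period $nd=m_n-m_0$ for all sufficiently large $j$.

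The main obstacle, and the only genuinely delicate point, is handling the reduction by the gcd. One must invoke the scaling invariance of the defining ideal so that we may replace $\mm+(j)$ by $\mm'(j)$ without changing Betti numbers, and then argue that for large enough $j$ the reduced sequence truly satisfies the minimality hypothesis that Theorem~\ref{ThmIndraConj} requires. Once these are in place, the periodicity is essentially the elementary fact that both $g(j)$ and $(m_0+j)/g(j) \bmod n$ are periodic in $j$ with period dividing $nd$.
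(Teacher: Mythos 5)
Your argument is correct and is essentially the paper's own proof: reduce $\mm+(j)$ by $g(j)=\gcd(m_0+j,d)$, note that the defining ideal (hence the Betti numbers) is unchanged, check that the reduced sequence is a genuine arithmetic sequence for $j$ large, and then apply Theorem~\ref{ThmIndraConj} together with the observations that $g(j+nd)=g(j)$ and that $(m_0+j+nd)/g(j)-(m_0+j)/g(j)=n\,d/g(j)\equiv 0 \bmod n$. The one assertion you leave to be "argued" --- that the reduced sequence minimally generates its numerical semigroup once its first entry exceeds $n$ --- is true and is precisely the step the paper settles by a short computation: a relation $\widetilde{m_k+j}=\sum_{i\neq k}r_i(\widetilde{m_i+j})$ forces $\widetilde{d}\,(k-\sum_{i\neq k}ir_i)=(\widetilde{m_0+j})(\sum_{i\neq k}r_i-1)$, which is impossible for $j$ sufficiently large.
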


\begin{proof}
Let $\mm=(m_0,\ldots,m_n)$ be an arithmetic sequence and $j\in\N$. Since $(m_0+j, \ldots, m_n+j)$ is in arithmetic progression, $\gcd{(m_0+j,\ldots,m_n+j)}=\gcd{(m_0+j,d)}$ where $d$ is the common difference in $\mm$.
We denote by $\widetilde{.}$ division by $\gcd{(m_0+j,d)}$. Then $\widetilde{m_i+j}=\widetilde{m_0+j}+i\widetilde{d}$ and
$\widetilde{\mm+(j)}=(\widetilde{m_0+j},\ldots,\widetilde{m_n+j})$ has gcd 1. Moreover, $C_{\mm+(j)}=C_{\widetilde{\mm+(j)}}$.

\smallskip
We claim that for $j\geq nd-m_0$, $\widetilde{\mm+(j)}$ is always an arithmetic sequence. If
$\widetilde{m_k+j}=\sum_{i\neq k}r_i(\widetilde{m_i+j})$ for $r_i\in\N$, then
$k\widetilde{d}+\widetilde{m_0+j}=\sum_{i\neq k}r_i(\widetilde{m_0+j}+i\widetilde{d})$ and hence
$\widetilde{d}(k-\sum_{i\neq k}ir_i)=(\widetilde{m_0+j})(\sum_{i\neq k}r_i-1)\geq n\widetilde{d}$ and this is not possible since $k-\sum_{i\neq k}ir_i<n$.
Now,
\begin{eqnarray*}
\widetilde{m_0+j+nd}&=&\widetilde{m_0+j}+n\widetilde{d}\\
&\equiv&\widetilde{m_0+j}\mod n.
\end{eqnarray*}
Note that $\gcd{(m_0+j,d)}$ is periodic with period $d$. So the Betti numbers of $C_{\mm+(j)}$ are periodic with period $nd$ for $j$ large enough.
\end{proof}

\bigskip

\bibliographystyle{amsplain}

\bigskip\bigskip

\end{document}